\setlist[enumerate]{leftmargin=.5in}
\setlist[itemize]{leftmargin=.5in}
\newcommand{\setR}[0]{\mathbb{R}}
\newcommand{\PP}[0]{\mathbb{P}}
\newcommand{\Pro}[0]{\mathcal{P}_{\Uad}}
\newcommand{\E}[0]{\mathbb{E}}
\newcommand{\w}[0]{\omega}
\newcommand{\B}[0]{\mathcal{B}}
\newcommand{\C}[0]{\mathcal{C}}
\newcommand{\LQ}[0]{\left[}
\newcommand{\RQ}[0]{\right]}
\newcommand{\D}[0]{\mathcal{D}}
\newcommand{\Uad}[0]{U_\text{ad}}
\newcommand{\alphab}[0]{{\bm \alpha}}
\newcommand{\QQ}[0]{\mathcal{Q}}
\newcommand{\us}[0]{u^\star}
\newcommand{\usMLMC}[1]{u^{\star,\overrightarrow{\w}_{#1}}}
\newcommand{\umq}[1]{u^{\text{MQ}(#1)}}
\newcommand{\betab}[0]{{\bm \beta}}
\newcommand{\xib}[0]{{\bm \xi}}
\newcommand{\N}[0]{\mathbb{N}}
\newcommand{\umlmc}[1]{u^{\text{MLMC}(#1)}}
\newcommand{\SL}[0]{\text{SL}}
\newcommand{\qbar}[0]{\overline{q}}
\crefname{hypothesis}{Hypothesis}{Hypotheses}
\title{Multilevel quadrature formulae for the optimal control of random PDEs}
\author{Fabio Nobile \thanks{CSQI Chair, Ecole Polytechnique F\'ed\'erale de Lausanne, Switzerland, 
  (\email{fabio.nobile@epfl.ch}).}
\and Tommaso Vanzan\thanks{Dipartimento di Scienze Matematiche, Politecnico di Torino, Italy, 
  (\email{tommaso.vanzan@polito.it}).}}
\begin{document}

\maketitle

\begin{abstract}
This manuscript presents a framework for using multilevel quadrature formulae to compute the solution of optimal control problems constrained by random partial differential equations.
Our approach consists in solving a sequence of optimal control problems discretized with different levels of accuracy of the physical and probability discretizations. The final approximation of the control is then obtained in a postprocessing step, by suitably combining the adjoint variables computed on the different levels. 
We present a general convergence and complexity analysis for an unconstrained linear quadratic problem under abstract assumptions on the spatial discretization and on the quadrature formulae. We detail our framework for the specific case of a MultiLevel Monte Carlo (MLMC) quadrature formula, and numerical experiments confirm the better computational complexity of our MLMC approach compared to a standard Monte Carlo sample average approximation, even beyond the theoretical assumptions.
\end{abstract}

\begin{keywords}
multilevel quadrature formulae, multilevel Monte Carlo, PDE-constrained optimization under uncertainty, random PDEs
\end{keywords}

\begin{MSCcodes}
35Q93, 49M41, 65C05, 35R60
\end{MSCcodes}

\section{Introduction}

In this manuscript, we are concerned with the numerical solution of Optimal Control Problems (OCPs) constrained by random Partial Differential Equations (PDEs). Concisely, the mathematical problem is 
\[\min_{u\in \Uad} J(u):=\E\LQ Q(y(u),u)\RQ,\]
where $u$ is the unknown \textit{deterministic} control, $y$ is the solution of a random PDE, $Q$ is quantity of interest, and $\mathbb{E}$ is the expectation operator.
OCPs under uncertainty have been extensively studied in the last decade since they are effective mathematical models to control physical processes that are affected by an intrinsic variability, or when only a partial knowledge of the system is available.

Theoretical foundations for the well-posedness of OCPs under uncertainty have been established in, e.g, \cite{kouri2018optimization,martinez2018optimal,Risk_adverse}. These analyses show that the evaluation of the gradient of $J$ requires the computation of the expectation of a properly defined adjoint variable over the probability distribution of the random inputs. In practice, a common solution strategy consists in a sample-based discretization, e.g., with Monte Carlo samples, of the underlying probability distribution \cite{shapiro2021lectures,milz2023sample}. Consequently, the expectation of the adjoint variable is replaced with a quadrature formula of, say, $N$ samples, and standard optimization algorithms can be used. However, a gradient descent iteration would generally require to solve $2N$ PDEs ($N$ state and $N$ adjoint equations) at each iteration: a cost which is rarely affordable. The need of effective strategies to reduce the computational cost of the gradient evaluation has motivated an active area of research.

Several contributions aim at improving the sampling strategy (i.e. reducing the number of samples while preserving the accuracy) by exploiting any regularity of the adjoint variable with respect to the random parameters. Examples are Quasi-Monte Carlo methods \cite{doi:10.1137/19M1294952}, and Stochastic Collocation methods based on tensorized Gaussian grids \cite[Appendix]{martin2021complexity}.
Another set of works approximate the gradient using multilevel quadrature formulae, such as Multilevel Monte Carlo (MLMC) \cite{van2019robust}, and Multilevel Quasi-Monte Carlo \cite{guth2023multilevel}. These techniques do not necessarily reduce the number of samples, but they exploit coarser discretizations to achieve an overall reduction of the computational cost.
More recently, \cite{beiser2023adaptive,ganesh2023gradient} proposed adaptive strategies that improve the accuracy of the quadrature formulae along the iterations, so that cheap gradient evaluations are used at the beginning, and accurate/expensive ones are needed only close to the optimum. These methods share similarities with batch-versions of stochastic gradient methods. The latters have been analyzed in the context of PDE-constrained optimization under uncertainty in, e.g.,  \cite{geiersbach2020stochastic,geiersbach2023stochastic}.

Notice that the multilevel/sparse approaches mentioned above are somehow limited to first-order, gradient based, optimization methods. As a matter of fact, a direct multilevel/sparse approximation of the expectation operator appearing in the objective function $J$ could, on the one hand, pave the way to higher order optimization algorithms, but on the other hand, may involve negative weights and therefore may destroy the (possible) convexity of the original OCP. A sparse grid approximation of the objective functional has been proposed in \cite{kouri2013trust,kouri2014inexact}, and the possible loss of convexity is handled through a trust region algorithm, which however is needed even for a simple linear-quadratic problem.

An alternative approach to use sparse and multilevel quadrature formulae has been recently proposed in \cite{nobile2024combination}, inspired by the so-called combination technique (CT) \cite{griebel1990combination,combination_te,pflaum1999error}.
The main idea of \cite{nobile2024combination} is to consider a hierarchical representation of the optimal control $u$ via,
\begin{equation}\label{eq:CT}
u\approx u_{CT}=\sum_{(\alphab,\betab)\in \mathcal{I}} c_{\alphab,\betab} u^{\alphab,\betab},
\end{equation}
where $\alphab,\betab$ are multi-indices related to the level of accuracy of a finite element discretization in space and of a Stochastic Collocation quadrature in probability, $u^{\alphab,\betab}$ is the minimizer of the OCP discretized with accuracy level $(\alphab,\betab)$, and $\mathcal{I}$ is a suitable set of sparse multi-indices. In other words, the approximation defined by \eqref{eq:CT} consists in solving a \textit{sequence} of (relatively cheap) minimization problems on tensor product type discretizations, and to combine the computed controls in a post-processing step to obtain the final approximation $u_{CT}$. Assuming that $u_{CT}$ converges to $u$ when $\mathcal{I}$ exhausts the multi-indices, and under some strong regularity assumptions, \cite{nobile2024combination} shows that the CT approximation 
drastically reduces the computational cost with respect to a tensor product approximation $u^{\alphab,\betab}$ of the same accuracy.
The present manuscript greatly generalizes the method and theory proposed in \cite{nobile2024combination} in several directions:
\begin{itemize}
\item We present a general framework that accommodates any (possibly randomized) multilevel quadrature formulae (hence, not restricted to the stochastic collocation method). Consequently, the new framework is more flexibile, for instance by resorting to a multilevel Monte Carlo quadrature, to tackle optimization problems that 1) do not admit an easy parametrization of the randomness through a set of independent random variables 2) have a low-regularity dependence on the the random inputs which, moreover, may be high-dimensional. 

\item Under very mild assumptions on the finite element discretization and on the quadrature formula, we present a complete convergence analysis of our multilevel framework in the case of an unconstrained linear-quadratic problem, with possibly boundary/local controls and observations. This is an important theoretical contribution since, a-priori, it is not evident that the combination of minimizers computed with different levels of accuracy converges to the exact minimizer. We further analyse the complexity of our multilevel framework and compare it with that of a standard, single level, approach. 

\item The novel approach can handle control constraints that can be modeled through closed and convex subsets of the control space, despite our convergence analysis does not cover at the moment this case. The main idea is to suitably combine adjoint variables instead of the controls, as originally proposed in \cite{nobile2024combination}.

\item We detail our new framework for the specific choice of a Monte Carlo quadrature formulae, that leads to a MLMC strategy. We present further numerical experiments that show the efficacy of the MLMC even beyond the hypotheses of the theoretical analysis, in particular with control constraints.
\end{itemize}

We wish to emphasize that our framework does not represent a new optimization algorithm to solve OCPs under uncertainty, rather it provides a way to smartly combine minimizers computed for different levels of spatial and probability accuracy. Notably, it is agnostic to the specific choice of the optimization algorithm used to solve the sequence of minimization problems, so that state-of-the-art optimization algorithms (e.g., \cite{nocedal1999numerical,hinze2008optimization,Kouri2018,kouri2022primal,antil2023alesqp}) and inner linear solvers (e.g., \cite{Kouri2018,vanzan,ciaramella2024multigrid}) can be used.

Further, the convergence analysis is based on a nonstandard multilevel study, since on each spatial level $\ell$, the adjoint variable $p$ is computed from a bilinear form (the optimality system) which depends itself on the law of $p$. Thus, an ad-hoc treatment is needed, which may be extended in future endeavours to study multilevel methods applied to random PDEs containing nonlocal terms with respect to the random variables.

The manuscript is organized as follows: Section \ref{sec:problem_formulation} defines the model problem and its numerical discretization.
Section \ref{sec:multilevelquadrature_formulae} introduces the new framework considering a general multilevel quadrature formula. Section \ref{sec:convergence_analysis} presents its convergence analysis under suitable assumptions. Section \ref{sec:Complexity} discusses the computational cost of our multilevel framework and details one particular instance that leads to a multilevel Monte Carlo method. Finally, numerical experiments are discussed in Section \ref{sec:numerical_section}, which show the better complexity of the proposed framework compared to a standard Monte Carlo sample average approximation.

\section{Problem formulation}\label{sec:problem_formulation}
Let $\D\subset \mathbb{R}^d$ be a Lipschitz bounded domain, $V$ a Sobolev space (e.g., $H^1(\D)$ equipped with suitable boundary conditions), and $(\Omega,\mathcal{F},\PP)$ a complete probability space. We consider the linear elliptic random PDE,
\begin{equation}\label{eq:random_PDE_weak}
a_\omega(y_\w,v)=\langle f,v\rangle_{V',V},\quad\forall v\in V,\quad \PP\text{-a-e. } \omega\in \Omega,
\end{equation}
where $a_{\omega}(\cdot,\cdot):V\times V\rightarrow \mathbb{R}$ is a continuous and coercive bilinear form for $\PP\text{-a.e. }\omega$ (with continuity and coercivity constants possibly dependent on $\w$), $f$ is an element in the dual of $V$, denoted $V^\prime$, and $\langle\cdot,\cdot \rangle_{V',V}$ denotes the duality between $V^\prime$ and $V$.
Associated to \eqref{eq:random_PDE_weak} we define, for $\PP\text{-a.e. }\omega\in \Omega$, the linear solution operator $S^\w: f\in V^\prime \rightarrow S^\w f\in V$, where 
\[a_\omega(S^\w f,v)=\langle f,v\rangle_{V',V},\quad \forall v\in V.\]
$S^\w$ is a continuous operator and we suppose its continuity constant $C_S(\w)$ lies in $L^q(\Omega;\setR)$ for any $q\in [1,\infty)$ with respect to the probability measure $\mathbb{P}$ (see, e.g., \cite{lord_powell_shardlow_2014,Scheichl} for sufficient conditions on $a_\w(\cdot,\cdot)$ for this to hold).

\noindent In this manuscript, we consider the minimization of functionals constrained by \eqref{eq:random_PDE_weak}.
Our model problem is
\begin{equation}\label{eq:OCP_model_problem}
\begin{aligned}
&\min_{u\in \Uad ,y\in L^2(\Omega;V)} \widehat{J}(y,u):= \frac{1}{2}\E\LQ \|\C y-y_d\|^2_H\RQ +\frac{\nu}{2}\|u\|^2_{U},\\
&\quad\text{subject to}\\
&a_\omega(y_\omega,v)=\langle \B u,v\rangle_{V',V},\quad \forall v \in V,\ \PP\text{-a.e. } \omega\in \Omega,
\end{aligned}
\end{equation}
where $\Uad$ is a closed and convex subset of a Hilbert space $U$ and 
$\B:U\rightarrow V^\prime$
is a linear and continuous control operator allowing possibly for a local control (i.e. a control acting only on a subset $\D_0\subset \D)$ or a boundary control (i.e. a control acting as Neumann condition on a subset of $\partial \D$). The Hilbert space $H$ is the space of observations, $y_d\in H$ is the target state, $\C:V\rightarrow H$ is a linear and continuous observation operator, $\E$ is the expectation operator, and $\nu$ is a positive scalar.
Introducing the linear control-to-state map $\widehat{S}:V^\prime \rightarrow L^2(\Omega;V)$, with $(\widehat{S}g)(\w)=S^\omega g$, $\forall g\in V^\prime$ and for $\PP\text{-a.e. }\omega$, 
the reduced formulation of \eqref{eq:OCP_model_problem} is
\begin{equation}\label{eq:OCP}
\min_{u\in \Uad} J(u):=\widehat{J}(\widehat{S}\B u,u).
\end{equation}
Existence and uniqueness of the minimizer of \eqref{eq:OCP} follows directly from standard variational arguments \cite{lions1971optimal,hinze2008optimization,troltzsch2010optimal,antil2018brief,kouri2018optimization}. 
Further, due to the linearity of the PDE constraint, $J$ is Fr\'echet differentiable, and its Fr\'echet derivative at a point $u$ along a direction $v\in U$ is the linear functional $J^\prime(u):U \rightarrow \setR$ equal to
\[\langle J^\prime(u),v\rangle_{U',U} = \langle \nu \Lambda_U u-\B^\star \E\LQ S^{\w,\star}(\C^\star \Lambda_H(y_d-\C S^{\w}(\B u)))\RQ,v\rangle_{U',U},\quad \forall v\in U\]
where $\Lambda_U$ and $\Lambda_H$ are the Riesz isomorphisms from $U$ to $U^\prime$ and from $H$ to $H^\prime$, $\B^\star$ and $\C^\star$ are the adjoints of $\mathcal{B}$ and $\C$ respectively, and $S^{\w,\star}$ is the adjoint operator of $S^\w$ satisfying $\forall g\in V^\prime$ and for $\PP\text{-a.e. } \omega$,
\[ a_\omega(v,S^{\w,\star}g)=\langle g,v\rangle_{V',V},\quad \forall v \in V.\]
The minimizer $\us$ of \eqref{eq:OCP} satisfies the optimality condition (\cite[Theorem 1.41]{hinze2008optimization}),
\begin{equation}\label{eq:inequality}
\langle J^\prime(\us), w-\us\rangle_{U',U} \geq 0,\quad \forall w\in \Uad,
\end{equation}
and since $\Uad$ is convex and closed, the variational inequality \eqref{eq:inequality} may be reformulated as the nonlinear equation (see, e.g., \cite[Corollary 1.2]{hinze2008optimization})
\begin{equation}\label{eq:optimality_condition}
\us =\Pro\left(\frac{1}{\nu} \Lambda_U^{-1} \B^\star\E\LQ p(\us)\RQ\right),
\end{equation}
where to ease the notation we have introduced the adjoint variable $p(\us)\in L^2(\Omega;V)$, defined as $p^\w(\us):=S^{\w,\star}(\C^\star\Lambda_H(y_d-\C S^{\w}(\B \us)))$, and $\Pro$ represents the projection onto the set of admissible controls $\Uad$. Both from the theoretical and numerical point of view, it is useful to remark that \eqref{eq:optimality_condition} admits the equivalent full-space formulation, obtained by explicitly stating the dependence of $p(\us)$ on $\us$,
\begin{equation}\label{eq:optimality_system}
\begin{aligned}
a_\omega(y^\w,v)&=\langle \B \us,v\rangle_{V',V},\quad\forall v\in V,\quad \PP\text{-a-e. } \omega\in \Omega,\\
a_\omega(v,p^\w)&=\langle \C^\star\Lambda_H(y_d-\C y^\w) ,v\rangle_{V',V},\quad\forall v\in V,\quad \PP\text{-a-e. } \omega\in \Omega,\\
\us&=\Pro\left(\frac{1}{\nu}\Lambda_U^{-1}\B^\star\E\LQ p\RQ\right).
\end{aligned}
\end{equation}

\subsection{Numerical Discretization}\label{sec:numerical_discretization}

The numerical solution of \eqref{eq:OCP_model_problem} requires two distinct numerical approximations: a finite dimensional approximation of the solution operator $S$, and a suitable quadrature formula for the expectation operator $\E$. In the next paragraphs, we detail these two components and set up the notation that will ease the understanding of the novel multilevel approximation proposed.

Let $\left\{\mathcal{T}_\ell\right\}_{\ell\geq 0}$ be a family of regular triangulations of $\mathcal{D}$ of mesh sizes $\left\{h_{\ell}\right\}_{\ell\geq 0}$,  with $\tau:=\min_{\ell} \frac{h_{\ell-1}}{h_{\ell}}<\infty$, and $\left\{V_{\ell}\right\}_{\ell\geq 0}$ be an associated sequence of finite dimensional subspaces of $V$.
For any $V_\ell$, we define finite dimensional approximations of $S^\w$ and $S^{\w,\star}$, that is, $S^\w_{\ell}: V^\prime \rightarrow V_{\ell}$ and $S^{\w,\star}_{\ell}: V^\prime \rightarrow V_{\ell}$, satisfying
\begin{equation*}
\begin{aligned}
a_{\w}(S^\w_\ell g,v_\ell)=\langle g,v_\ell\rangle_{V',V},\quad \forall v_\ell \in V_\ell,\quad \mathbb{P}\text{-a.e.}\;\w\in \Omega,\\
a_{\w}(v_\ell,S^{\w,\star}_\ell g)=\langle g,v_\ell\rangle_{V',V},\quad \forall v_\ell \in V_\ell,\quad \mathbb{P}\text{-a.e.}\;\w\in \Omega.
\end{aligned}
\end{equation*}
Note that we do not consider a sequence of finite element subspaces of the control space $U$, but we follow the variational discretization principle (\cite[Chapter 3]{hinze2008optimization}), so that the control $u$ is implicitly discretized by the choice of $S^\w_\ell$.

Next, let $\left\{\QQ^k \right\}_{k\geq 0}$ be a sequence of quadrature formulae of increasing accuracy. The quadrature nodes of $\QQ^k$ are denoted by $\left\{\xi^k_j\right\}_{j=1}^{N_k}$ and the quadrature weights by $\left\{\zeta^k_j\right\}_{j=1}^{N_k}$. We assume that all weights are positive. For every random variable $X\in L^1(\Omega;K)$, $K$ being a generic Banach space, the expected value of X can be approximated by
\[\E\LQ X\RQ \approx \QQ^k\LQ X\RQ =\sum_{j=1}^{N_k} \zeta^k_j X(\xi^k_j).\] 

\noindent This general setting includes Monte Carlo methods, whose quadrature nodes are drawn randomly and the weights are all equal to $\zeta_j^k=\frac{1}{N_k}$ for every $k\geq 0$.
Further, if we assume that the probability space can be parametrized by a set of $M$ random variables, we may also consider (possibly randomized) Quasi-Monte Carlo methods as well as collocation methods based on Gaussian quadrature formulae. We however restrict our analysis to quadrature formulae with positive weights, thus excluding sparse grids, since negative quadrature weights may destroy the convexity of \eqref{eq:OCP_model_problem}.

To compute a numerical approximation of $\us$, we can solve for given and fixed values of $\ell$ and $k$ the optimization problem,
\begin{equation}\label{eq:OCP_kell}
\min_{u\in U_{ad}} J_{\ell,k}(u):=\frac{1}{2}\QQ^k \LQ \|\C S^\w_{\ell}\B u-y_d\|_H^2 \RQ +\frac{\nu}{2}\|u\|^2_{U}.
\end{equation}
We denote the unique minimizer of \eqref{eq:OCP_kell} by $\us_{\ell,k}$, and remark that it satisfies the optimality conditions
\begin{align}
\langle J_{\ell,k}^\prime(\us_{\ell,k}), w-\us_{\ell,k}\rangle_{U',U} \geq 0&,\quad \forall w\in \Uad,\label{eq:inequality_disc}\\
\us_{\ell,k} =\Pro\left(\frac{1}{\nu} \Lambda_U^{-1} \B^\star\QQ^k\LQ p_\ell(\us_{\ell,k})\RQ\right)&\label{eq:opt_condition},
\end{align}
where $p^\w_{\ell}(u):=S_\ell^{\w,\star}(\C^\star \Lambda_H(y_d-\C S^\w_\ell(\B u)))$ is the adjoint variable defined through the finite dimensional operators $S_\ell^\w$ and $S^{\w,\star}_\ell$. Similarly to \eqref{eq:optimality_system}, \eqref{eq:opt_condition} can be formulated as the fully-discrete, full-space nonlinear system,
\begin{equation}\label{eq:full_space_system_discretized}
\begin{aligned}
a_{\xi^k_j}(y^{\xi^k_j}_\ell,v)&=\langle \B u_{\ell,k},v\rangle_{V',V},\quad\forall v\in V_\ell,\quad \forall \xi_j^k,
\; j=1,\dots,N_k,\\
a_{\xi^k_j}(v,p^{\xi^k_j}_\ell)&=\langle \C^\star\Lambda_H(y_d-\C y^{\xi^k_j}_\ell) ,v\rangle_{V',V},\quad\forall v\in V_\ell,\quad \forall \xi_j^k,\; j=1,\dots,N_k,\\
u_{\ell,k}&=\Pro\left(\frac{1}{\nu}\Lambda_U^{-1}\B^\star\QQ^k\LQ p_\ell\RQ\right).
\end{aligned}
\end{equation}
From the practical point of view, \eqref{eq:opt_condition} paves the way to solution strategies based on (projected) gradient descent methods, which at the  $i$-th iteration require to solve $N_k$ state and $N_k$ adjoint finite element problems set on the space $V_\ell$ to compute $\QQ^k\LQ p(u^i_{\ell,k})\RQ$, or semismooth Newton's algorithms. Alternatively, \eqref{eq:full_space_system_discretized} is suitable for full-space solvers, which require only to precondition the $2N_k$ linear systems, but may suffer from high memory storage. In both cases, there is a great need to reduce the overall computational cost. 
In the next section, we propose our solution which will leverage different levels of discretizations of both physical and probability spaces to achieve a better computational complexity than the classical discretization and solution paradigm that we have here recalled.

We close this section by introducing two auxiliary optimization problems which will naturally arise in our analysis. They are obtained by performing either a spatial or a probability semidiscretization of \eqref{eq:OCP}, namely
\begin{equation}\label{eq:OCP_ell}
\min_{u\in \Uad} J_{\ell,\infty}(u):=\frac{1}{2}\E \LQ \|\C S^\w_\ell \B u -y_d\|_H^2 \RQ +\frac{\nu}{2}\|u\|^2_{U},
\end{equation}
and
\begin{equation}\label{eq:OCP_k}
\min_{u\in \Uad} J_{\infty,k}(u):=\frac{1}{2}\QQ^k \LQ \|\C S^\w \B u -y_d\|_H^2 \RQ +\frac{\nu}{2}\|u\|^2_{U}.
\end{equation}
We denote the minimizers of \eqref{eq:OCP_ell} and \eqref{eq:OCP_k} by $\us_{\ell,\infty}$ and $\us_{\infty,k}$, respectively.

\section{Multilevel quadrature formulae}\label{sec:multilevelquadrature_formulae}
To define our multilevel approximation, we start by introducing the univariate details
\begin{align*}
\Delta^s \QQ^k\LQ p_\ell(\us_{\ell,k})\RQ&:=\QQ^k\LQ p_\ell(\us_{\ell,k})\RQ-\QQ^k\LQ p_{\ell-1}(\us_{\ell-1,k})\RQ,\\
\Delta^q \QQ^k\LQ p_\ell(\us_{\ell,k})\RQ&:=\QQ^k\LQ p_\ell(\us_{\ell,k})\RQ-\QQ^{k-1}\LQ p_{\ell}(\us_{\ell,k-1})\RQ,
\end{align*}
which compute the difference between the approximated quadrature of the adjoint variable, evaluated at the optimal control, obtained with two consecutive levels of spatial and quadrature discretizations, respectively.
Then, the so-called hierarchical surpluses $\Delta \QQ^k\LQ p_\ell(\us_{\ell,k})\RQ $ are defined for every $\ell\geq 0$ and $k\geq 0$ as 
\begin{align*}\label{eq:definition_delta}
\Delta \QQ^k& \LQ p_\ell(\us_{\ell,k})\RQ := \Delta^s\Delta^q \QQ^k\LQ p_\ell(\us_{\ell,k})\RQ\\
&=\QQ^k\LQ p_\ell(\us_{\ell,k})\RQ - \QQ^{k}\LQ p_{\ell-1}(\us_{\ell-1,k})\RQ - \QQ^{k-1}\LQ p_\ell(\us_{\ell,k-1})\RQ + \QQ^{k-1}\LQ p_{\ell-1}(\us_{\ell-1,k-1})\RQ,
\end{align*}
with the notation that $\QQ^{-1}\LQ p_{\ell}(\us_{\ell,-1})\RQ=\QQ^{k}\LQ p_{-1}(\us_{-1,k})\RQ=0$, $\forall k,\ell\geq 0$. Letting $L$ be an integer, a direct calculation shows that
\begin{align*}
\sum_{\ell=0}^L\sum_{k=0}^L \Delta \QQ^k\LQ p_\ell(\us_{\ell,k})\RQ&=\sum_{\ell=0}^L\sum_{k=0}^L \left(\QQ^k\LQ p_\ell(\us_{\ell,k})\RQ - \QQ^{k-1}\LQ p_\ell(\us_{\ell,k-1})\RQ\right)\\
& -\sum_{\ell=0}^L\sum_{k=0}^L \left( \QQ^{k}\LQ p_{\ell-1}(\us_{\ell-1,k})\RQ - \QQ^{k-1}\LQ p_{\ell-1}(\us_{\ell-1,k-1})\RQ\right)\\
&=\sum_{\ell=0}^L \left( \QQ^{L}\LQ p_{\ell}(\us_{\ell,L})\RQ - \QQ^{L}\LQ p_{\ell-1}(\us_{\ell-1,L})\RQ\right) =  \QQ^L\LQ p_L(\us_{L,L})\RQ.
\end{align*}
which allows us to reformulate the optimality condition \eqref{eq:opt_condition} as
\begin{equation}\label{eq:optimality_condition_revised}
\us_{L,L} =\Pro\left(\frac{1}{\nu}\Lambda_U^{-1}\B^\star\QQ^L\LQ p_L(\us_{L,L})\RQ\right)=\Pro\left(\frac{1}{\nu}\Lambda_U^{-1}\B^\star\sum_{\ell=0}^L\sum_{k=0}^L \Delta \QQ^k\LQ p_\ell(\us_{\ell,k})\RQ\right).
\end{equation}
Equation \eqref{eq:optimality_condition_revised} expresses the optimal control $\us_{L,L}$ as the projection on $\Uad$ of a combination of several adjoint variables $p_{\ell}(\us_{\ell,k})$, which, together with their quadrature $\QQ^k\LQ p_{\ell}(\us_{\ell,k})\RQ$, can be computed by solving the associated discretized optimal control \eqref{eq:OCP_kell} for each instance of $k$ and $\ell$. Although at first \eqref{eq:optimality_condition_revised} is not computationally attractive (to obtain $\us_{L,L}$ it is computationally cheaper to minimize $J_{L,L}$ once and for all, instead of minimizing $J_{k,\ell}$ for every $0\leq k,\ell\leq L$), \eqref{eq:optimality_condition_revised} represents the starting point of our novel approximation. Indeed,
it can be interpreted as a full tensor approximation over the product space of spatial approximations and quadrature formulae \cite{harbrecht2012multilevel}.
Hence, it is natural to consider a sparse approximation, obtained by constraining the indices of the space and probability discretizations to a different set. In this work, we restrict the summation over the set of multi-indices $\left\{(l,k)\in \N^2: 0\leq \ell+k\leq L\right\}$, obtaining the Multilevel Quadrature approximation of level $L$, denoted by $\umq{L}$, that is defined as
\begin{equation}\label{eq:multilevel_approx}
\medmuskip=-1mu
\thinmuskip=-1mu
\thickmuskip=-1mu
\nulldelimiterspace=0.9pt
\scriptspace=0.9pt 
\arraycolsep0.9em 
\begin{aligned}
\us_{L,L}\approx \umq{L}&:=\Pro\left(\frac{1}{\nu}\Lambda_U^{-1}\B^\star\sum_{\ell=0}^L\sum_{k=0}^{L-\ell} \Delta \QQ^k\LQ p_\ell(\us_{\ell,k})\RQ\right)\\
&=\Pro\left(\frac{1}{\nu}\Lambda_U^{-1}\B^\star\sum_{\ell=0}^L \QQ^{L-\ell}\LQ p_{\ell}(\us_{\ell,L-\ell})-p_{\ell-1}(\us_{\ell-1,L-\ell})\RQ \right)\\
&=\Pro\left(\frac{1}{\nu}\Lambda_U^{-1}\B^\star\left(\QQ^{L}\LQ p_{0}(\us_{0,L})\RQ + \sum_{\ell=1}^L \QQ^{L-\ell}\LQ p_{\ell}(\us_{\ell,L-\ell})-p_{\ell-1}(\us_{\ell-1,L-\ell})\RQ \right)\right).
\end{aligned}
\end{equation}

The expression $\umq{L}$ represents the multilevel approximation for the optimal control $\us$ that we propose and study in this manuscript. We will present a detailed convergence analysis assuming that $\left\{\QQ^k\right\}_{k\geq 0}$ is a sequence of unbiased, statistically independent, randomized quadrature formulae, since the theoretical analysis is more challenging in this setting. As the analysis develops, we discuss how the results can be framed in the context of deterministic quadrature formulae.
 
\noindent The next proposition provides a bound for the error associated to $\umq{L}$.
In view of the analysis developed in Section \ref{sec:convergence_analysis}, from now on we assume that the control operator $\B$ is a continuous linear functional from $U$ to $Z^\prime$, where $Z$ is a Hilbert space such that $V\subsetneq Z$, and we set $C_{\B}:=\|\B\|_{\mathcal{L}(U,Z^\prime)}$ \footnote{For several problems of interest, $\B$ defines a functional not only on $V$, but also on a larger space $Z$ (e.g., the classical distributed control $\langle \B u,v\rangle:=\int_\D u v$ is a well-defined functional over $L^2(\D)\supset H^1(\D)$). This additional hypothesis will be needed to assume convergence rates for the spatial finite element discretizations.}
\begin{proposition}\label{lemma:error_bound}
Let $\left\{\QQ^k\right\}_{k\geq 0}$ be a sequence of unbiased, statistically independent, randomized quadrature formulae. Then, the multilevel approximation $\umq{L}$ defined in \eqref{eq:multilevel_approx} satisfies the error bound,
\begin{equation}\label{eq:lemma_error_statement}
\medmuskip=-1mu
\thinmuskip=-1mu
\thickmuskip=-1mu
\nulldelimiterspace=0.9pt
\scriptspace=0.9pt 
\arraycolsep0.9em 
\begin{aligned}
\mathbb{E}\LQ \|\us -\umq{L}\|^2_U\RQ & \leq 2 \|\us-\us_{L,\infty}\|_U^2\\
& +
\frac{4C^2_{\B}}{\nu^2}\bigg(\sum_{\ell=0}^L\E\LQ \|\left(\mathbb{E}-\QQ^{L-\ell}\right)(p_{\ell}(\us_{\ell,\infty})-p_{\ell-1}(\us_{\ell-1,\infty})\|_Z^2\RQ \\
& + L\sum_{\ell=0}^L\E\LQ \QQ^{L-\ell}\|p_{\ell}(\us_{\ell,\infty})-p_{\ell}(\us_{\ell,L-\ell})-p_{\ell-1}(\us_{\ell-1,\infty})+p_{\ell-1}(\us_{\ell-1,L-\ell})\|_Z^2\RQ\bigg).
\end{aligned}
\end{equation}
\end{proposition}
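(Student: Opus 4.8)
The plan is to insert the spatially semidiscrete minimizer $\us_{L,\infty}$ of \eqref{eq:OCP_ell} as an intermediate quantity. First I would apply $\|a+b\|_U^2\le 2\|a\|_U^2+2\|b\|_U^2$ to write
\[\E\LQ\|\us-\umq{L}\|_U^2\RQ\le 2\|\us-\us_{L,\infty}\|_U^2+2\,\E\LQ\|\us_{L,\infty}-\umq{L}\|_U^2\RQ,\]
where the first (deterministic) term is already the leading contribution in \eqref{eq:lemma_error_statement}. For the second term I would use that $\us_{L,\infty}$ satisfies the optimality condition of \eqref{eq:OCP_ell}, namely $\us_{L,\infty}=\Pro(\frac1\nu\Lambda_U^{-1}\B^\star\E\LQ p_L(\us_{L,\infty})\RQ)$, while $\umq{L}$ is, by definition \eqref{eq:multilevel_approx}, the projection of $\frac1\nu\Lambda_U^{-1}\B^\star$ applied to the multilevel sum. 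Since the metric projection onto the closed convex set $\Uad$ is nonexpansive and $\|\Lambda_U^{-1}\B^\star g\|_U\le C_{\B}\|g\|_Z$ (because $\Lambda_U^{-1}$ is a Riesz isometry and $\|\B^\star\|_{\mathcal{L}(Z,U')}=C_{\B}$), this reduces the estimate to bounding $\frac{C_{\B}^2}{\nu^2}\E\|E\|_Z^2$, where $E$ is the difference of the two arguments of $\Pro$.

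The decisive algebraic manoeuvre is to telescope the exact term \emph{against the semidiscrete controls}. Using $p_{-1}=0$ gives $\E\LQ p_L(\us_{L,\infty})\RQ=\sum_{\ell=0}^L\E\LQ p_\ell(\us_{\ell,\infty})-p_{\ell-1}(\us_{\ell-1,\infty})\RQ$; adding and subtracting $\QQ^{L-\ell}(p_\ell(\us_{\ell,\infty})-p_{\ell-1}(\us_{\ell-1,\infty}))$ in each summand then splits $E=E_1+E_2$, with
\[E_1=\sum_{\ell=0}^L(\E-\QQ^{L-\ell})\big(p_\ell(\us_{\ell,\infty})-p_{\ell-1}(\us_{\ell-1,\infty})\big),\]
\[E_2=\sum_{\ell=0}^L\QQ^{L-\ell}\big(p_\ell(\us_{\ell,\infty})-p_\ell(\us_{\ell,L-\ell})-p_{\ell-1}(\us_{\ell-1,\infty})+p_{\ell-1}(\us_{\ell-1,L-\ell})\big),\]
and $\E\|E\|_Z^2\le 2\E\|E_1\|_Z^2+2\E\|E_2\|_Z^2$ produces the prefactor $4C_{\B}^2/\nu^2$. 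For $E_1$ I would use that each control $\us_{\ell,\infty}$ is deterministic (it minimizes $J_{\ell,\infty}$, built on the exact $\E$), so by unbiasedness every summand has zero mean, and by the assumed statistical independence of the quadrature formulae at the distinct levels $L-\ell$ the summands are mutually independent, hence orthogonal in $L^2(\Omega;Z)$; this yields the exact identity $\E\|E_1\|_Z^2=\sum_{\ell=0}^L\E\|(\E-\QQ^{L-\ell})(p_\ell(\us_{\ell,\infty})-p_{\ell-1}(\us_{\ell-1,\infty}))\|_Z^2$, i.e. the statistical term. For $E_2$, Cauchy--Schwarz across the summands yields the prefactor $L$ of the second sum, and the pointwise estimate $\|\QQ^{k}(g)\|_Z^2\le\QQ^{k}(\|g\|_Z^2)$ — valid by Jensen's inequality since the weights are positive and normalized (summing to one) — turns each summand into the corresponding term of \eqref{eq:lemma_error_statement}.

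The step I expect to be the main obstacle is precisely the choice of linearization point in the telescoping. The adjoint $p_\ell(u)$ depends on the control $u$, and the doubly discrete control $\us_{\ell,L-\ell}$ is itself a nonlinear function of the random quadrature nodes; had the telescoping been carried out around $\us_{\ell,L-\ell}$, the quadrature error $(\E-\QQ^{L-\ell})$ would be neither unbiased nor independent across levels, and the orthogonality underlying the bound on $E_1$ would collapse. Linearizing instead around the deterministic $\us_{\ell,\infty}$ restores unbiasedness and independence, at the price of the extra term $E_2$, which quantifies exactly how much the detail $p_\ell-p_{\ell-1}$ changes when the control is moved from $\us_{\ell,\infty}$ to $\us_{\ell,L-\ell}$. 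This is the nonstandard multilevel feature anticipated in the introduction, and keeping these two error sources cleanly separated is the crux of the argument.
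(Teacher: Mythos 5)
Your proposal is correct and follows essentially the same route as the paper's proof: the same splitting via $\us_{L,\infty}$, the same telescoping of $\E\LQ p_L(\us_{L,\infty})\RQ$ around the deterministic semidiscrete controls, the same use of unbiasedness and level-wise independence to obtain orthogonality for the statistical term, and the same Cauchy--Schwarz/Jensen steps (positive weights summing to one) to produce the factor $L$ and the term $\QQ^{L-\ell}\|\cdot\|_Z^2$. You have also correctly identified the key point that linearizing around $\us_{\ell,\infty}$ rather than $\us_{\ell,L-\ell}$ is what preserves unbiasedness and independence, which is exactly the observation the paper makes.
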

\begin{proof}
We start by adding and substracting $\us_{L,\infty}$, that is, the minimizer of \eqref{eq:OCP_ell},
\begin{equation}
\E\LQ \|\us-\umq{L}\|^2_U\RQ \leq 2\|\us-\us_{L,\infty}\|^2_U+2\E\LQ \|\us_{L,\infty}-\umq{L}\|^2_U\RQ,
\end{equation}
and we then focus on the second term.
Using that the projector $\Pro$ is a nonexpansive map (\cite[Lemma 1.10]{hinze2008optimization}), the telescopic identity $\E\LQ p_{L}(\us_{L,\infty})\RQ=\sum_{\ell=0}^L \E\LQ p_{\ell}(\us_{\ell,\infty})-p_{\ell-1}(\us_{\ell-1,\infty})\RQ$, and the continuity of $\Lambda_U^{-1}$ and of $\B^\star$, we get
\begin{equation}\label{eq:first_eq_thm}
\medmuskip=-1mu
\thinmuskip=-1mu
\thickmuskip=-1mu
\nulldelimiterspace=0.9pt
\scriptspace=0.9pt 
\arraycolsep0.9em 
\begin{aligned}
&\E\LQ \|\us_{L,\infty}-\umq{L}\|^2_U\RQ\\
&=\E\LQ \left\|\Pro\left(\frac{\Lambda_U^{-1}\B^\star}{\nu}\E\LQ p_{L}(\us_{L,\infty})\RQ\right) -\Pro\left(\frac{\Lambda_U^{-1} \B^\star}{\nu}\sum_{\ell=0}^L \QQ^{L-\ell}\LQ p_{\ell}(\us_{\ell,L-\ell})-p_{\ell-1}(\us_{\ell-1,L-\ell})\RQ \right)\right\|^2_U\RQ\\
&\leq \E\LQ \left\|\frac{1}{\nu}\Lambda_U^{-1} \B^\star\left( \sum_{\ell=0}^L \E\LQ p_{\ell}(\us_{\ell,\infty})-p_{\ell-1}(\us_{\ell-1,\infty})\RQ -\QQ^{L-\ell}\LQ p_{\ell}(\us_{\ell,L-\ell})-p_{\ell-1}(\us_{\ell-1,L-\ell})\RQ\right)\right\|^2_U\RQ\\
&\leq \frac{2C^2_{\B}}{\nu^2}\Bigg( \E\LQ\left\|\sum_{\ell=0}^L \E\LQ p_{\ell}(\us_{\ell,\infty})-p_{\ell-1}(\us_{\ell-1,\infty})\RQ -\QQ^{L-\ell}\LQ p_{\ell}(\us_{\ell,\infty})-p_{\ell-1}(\us_{\ell-1,\infty})\RQ\right\|^2_Z\RQ\\
&+\E\LQ \left\|\sum_{\ell=0}^L \QQ^{L-\ell} \LQ p_{\ell}(\us_{\ell,\infty})-p_{\ell-1}(\us_{\ell-1,\infty})
-p_{\ell}(\us_{\ell,L-\ell})+p_{\ell-1}(\us_{\ell-1,L-\ell})\RQ\right\|^2_Z\RQ\Bigg).
\end{aligned}
\end{equation}
Next, using the statistical independence of the quadrature formulae across the levels and since $\E\LQ \QQ^{L-\ell} \LQ p_{\ell}(\us_{\ell,\infty})-p_{\ell-1}(\us_{\ell-1,\infty})\RQ \RQ = \E\LQ p_{\ell}(\us_{\ell,\infty})-p_{\ell-1}(\us_{\ell-1,\infty})\RQ$ (which holds true because $\QQ^{L-\ell}$ is unbiased and the adjoint variables are evaluated at the minimizers of continuous OCPs in probability), we have
\begin{equation}
\medmuskip=-1mu
\thinmuskip=-1mu
\thickmuskip=-1mu
\nulldelimiterspace=0.9pt
\scriptspace=0.9pt 
\arraycolsep0.9em 
\E\LQ\left\|\sum_{\ell=0}^L \left(\E-\QQ^{L-\ell}\right)\LQ p_{\ell}(\us_{\ell,\infty})-p_{\ell-1}(\us_{\ell-1,\infty})\RQ\right\|^2_Z\RQ=\sum_{\ell=0}^L \E\LQ\left\|\left(\E-\QQ^{L-\ell}\right)\LQ p_{\ell}(\us_{\ell,\infty})-p_{\ell-1}(\us_{\ell-1,\infty})\RQ\right\|_Z^2\RQ.
\end{equation}
On the other hand, since $\E\LQ p_{\ell}(\us_{\ell,\infty})\RQ \neq \E\LQ \QQ^{L-\ell}\LQ p_{\ell}(\us_{\ell,L-\ell})\RQ\RQ$, we can bound the last term using twice the Cauchy-Schwarz inequality,
\begin{equation}
\begin{aligned}
&\E\LQ \left\|\sum_{\ell=0}^L \QQ^{L-\ell} \LQ p_{\ell}(\us_{\ell,\infty})-p_{\ell-1}(\us_{\ell-1,\infty})
-p_{\ell}(\us_{\ell,L-\ell})+p_{\ell-1}(\us_{\ell-1,L-\ell})\RQ\right\|^2_Z\RQ\\
&\leq \E\LQ L \sum_{\ell=0}^L \left\| \QQ^{L-\ell} \LQ p_{\ell}(\us_{\ell,\infty})-p_{\ell-1}(\us_{\ell-1,\infty})
-p_{\ell}(\us_{\ell,L-\ell})+p_{\ell-1}(\us_{\ell-1,L-\ell})\RQ\right\|^2_Z\RQ\\
&=L \sum_{\ell=0}^L \E\LQ \left\| \sum_{j=1}^{N_{L-\ell}} \zeta_j^{L-\ell} \left(p^{\xi_j^{L-\ell}}_{\ell}(\us_{\ell,\infty})-p^{\xi_j^{L-\ell}}_{\ell-1}(\us_{\ell-1,\infty})
-p^{\xi_j^{L-\ell}}_{\ell}(\us_{\ell,L-\ell})+p^{\xi_j^{L-\ell}}_{\ell-1}(\us_{\ell-1,L-\ell})\right)\right\|^2_Z\RQ\\
&\leq L\sum_{\ell=0}^L \E\LQ \QQ^{L-\ell} \left\| p_{\ell}(\us_{\ell,\infty})-p_{\ell-1}(\us_{\ell-1,\infty})
-p_{\ell}(\us_{\ell,L-\ell})+p_{\ell-1}(\us_{\ell-1,L-\ell})\right\|^2_Z\RQ,
\end{aligned}
\end{equation}
where the last step uses that the weights $\left\{\zeta_j\right\}_{j=1}^{N_L-\ell}$ are positive and sum up to one.
\end{proof}
\begin{remark}[Proposition \ref{lemma:error_bound} for deterministic quadrature formulae]\label{remark:deterministic_quadrature}
If the quadrature formula is deterministic, the proof of Lemma \ref{lemma:error_bound} can be adapted by using repeatedly the triangle inequality. The final error estimate is
\begin{equation}\label{eq:err_bound_deterministic}
\begin{aligned}
\|\us-\umq{L}\|_U&\leq  \|\us-\us_{L,\infty}\|_U+ \frac{C_{\B}}{\nu}\sum_{\ell=0}^L \left\|(\E-\QQ^{L-\ell})\LQ p_{\ell}(\us_{\ell,\infty})-
p_{\ell-1}(\us_{\ell-1,\infty})\RQ\right\|_Z\\
&+\frac{C_{\B}}{\nu}\sum_{\ell=0}^L \QQ^{L-\ell} \LQ\|p_{\ell}(\us_{\ell,\infty})-p_{\ell}(\us_{\ell,L-\ell})-p_{\ell-1}(\us_{\ell-1,\infty})+p_{\ell-1}(\us_{\ell-1,L-\ell})\|_Z\RQ.
\end{aligned}
\end{equation}
\end{remark}

The error estimates \eqref{eq:lemma_error_statement} and \eqref{eq:err_bound_deterministic} are made of three terms. The first one is the bias term due to the spatial discretization of the OCP. The second one is the standard statistical term typically arising in multilevel analyses.
The third term instead is novel and it requires a detailed analysis which is presented in Section \ref{sec:convergence_analysis}. It appears since on each spatial level $\ell$, the approximated quadrature formula of level $L-\ell$ enters into the definition of the nonlinear optimality conditions that are solved (see \eqref{eq:opt_condition} or  \eqref{eq:full_space_system_discretized}). 
From this point of view, our analysis may be extended to study multilevel methods for  random PDEs containing terms that depend on the law of the random solution, e.g.,
\[-\nabla\cdot \left(\kappa(x,\w)\nabla y(x,\w)\right) +\E\LQ y(x,\w)\RQ = f(x,\w)\quad \forall x\in\D,\;\PP\text{-a.e. }\w.\]
In the context of stochastic differential equations, a prototype example is the McKean-Vlasov equation.

We now discuss some implementation aspects. Algorithm \ref{alg:MLQ} details how $\umq{L}$ is computed: starting from a coarse mesh of level $\ell=0$ and a fine quadrature formula of level $L$, we solve the corresponding minimization problem \eqref{eq:OCP_kell} with any suitable optimization algorithm. Once $\us_{0,L}$ is computed, $\QQ^L\LQ p_0(\us_{0,L})\RQ$ is obtained through a post-processing step by evaluating the forward and adjoint solution operators on $\us_{0,L}$ for every quadrature node involved by $\QQ^{L}$. Notice that this post-processing step can be avoided using a full-space optimization algorithm for \eqref{eq:full_space_system_discretized}, since $\QQ^L\LQ p_0(\us_{0,L})\RQ$ is already computed along the solution process and can be returned as an output of the optimization algorithm.
Next, for $\ell=1$, we consider a coarser quadrature formula of level $L-1$ and solve the minimization problem on both a mesh of level $1$ (thus finer) and of level $0$. The quantity $\QQ^{L-1}\LQ p_{1}(\us_{1,L-1})-p_{0}(\us_{0,L-1})\RQ$ is again either obtained as the output of a full-space optimization algorithm, or computed in a post-processing step. 
The process is then repeated for all remaining levels $\ell$ smaller or equal than $L$.
Given this algorithmic description, we stress that $\umq{L}$ does not correspond to the output of a \textit{new} optimization algorithm, but in constrast it is defined as the combination (possibly with a projection onto the set of admissible controls if $\Uad\neq U$) of outputs of \textit{any} optimization algorithm applied to a sequence of minimization problems. From this point of view, our multilevel approximation is agnostic with respect to the optimization algorithm used, and should be interpreted as a smart way of combining spatial and quadrature discretizations at different accuracy levels, instead of solving a single, but very expensive, optimization problem.
 \begin{algorithm}[]
\setlength{\columnwidth}{\linewidth}
\caption{Multilevel Quadrature Approximation}
\begin{algorithmic}[1]\label{alg:MLQ}
\item \textbf{Require}: Level $L$, a sequence of finite element spaces $\left\{V_\ell\right\}_{\ell=0}^L$, and quadratures $\left\{\QQ^k\right\}_{k=0}^L$.
\item Compute $\us_{0,L}$ by solving \eqref{eq:OCP_kell}.
\item Compute $\QQ^{L}\LQ p_0(\us_{0,L})\RQ$.
\item \textbf{For} $\ell=1,\dots, L$
\item Compute $\us_{\ell-1,L-\ell}$ by solving \eqref{eq:OCP_kell}.
\item Compute $\us_{\ell,L-\ell}$ by solving \eqref{eq:OCP_kell}.
\item Compute $\QQ^{L-\ell}\LQ p_\ell(\us_{\ell,L-\ell})\RQ$ and $\QQ^{L-\ell}\LQ p_{\ell-1}(\us_{\ell-1,L-\ell})\RQ$.
\item \textbf{Endfor}
\item \textbf{Output}: Return $u^{MQ(L)}$ using \eqref{eq:multilevel_approx}.
\end{algorithmic}
\end{algorithm}
Notice further that Algorithm \ref{alg:MLQ} could be potentially parallelized straightforwardly, since all minimization problems (lines 2,4,5) are completely independent. However, solving them in a sequential way has the advantage that $\us_{\ell-1,L-\ell}$ represents a good initialization to compute $\us_{\ell,L-\ell}$ (line 5), (and similarly $\us_{\ell,L-\ell}$ is a good initial guess for the next iteration of the for loop), thus, potentially, very few iterations of an outer nonlinear optimization algorithm are needed to solve the problems in lines 4-5.

\begin{remark}[On the use of two different quadrature formulae on each level]
On each level one could potentially use a quadrature formula (say, $\QQ_1^{L-\ell}$) to semidiscretize the OCP and a different one (say, $\QQ_2^{L-\ell}$) to compute the average of the resulting adjoint variable.
From the theoretical point of view, the error bounds \eqref{eq:lemma_error_statement} and \eqref{eq:err_bound_deterministic} would still hold true with $\QQ^{\ell}$ replaced by $\QQ_2^{\ell}$. The use of two different, randomized and statistically independent, quadrature formulae on each level would have a particular theoretical interest since it would greatly simplify the analysis presented in Section \ref{sec:convergence_analysis} (see also Remark \ref{remark:power_p}).
Nevertheless, it would prevent from recycling the adjoint variables that are computed along the optimization procedure. For this reason, in the rest of the manuscript we assume to use the same quadrature formulae, and detail the convergence analysis in this more delicate case.
\end{remark}

\section{Convergence analysis}\label{sec:convergence_analysis}
In this section, we formulate mild assumptions on the spatial and on the quadrature approximations in order to control the three terms appearing in \eqref{eq:lemma_error_statement}. To bound the last term, our analysis requires to make the additional assumption $\Uad\equiv U$, that is, our theory does not cover the case where control constraints are present.
We still however cover boundary or local controls, modeled by the control operator $\B$, and local observations through the operator $\C$.
We will numerically investigate the efficiency of \eqref{eq:multilevel_approx} in the presence of control constraints in the numerical Section \ref{sec:numerical_section}.

We start formulating an assumption on the sequence of quadrature formulae.

\begin{assumption}[Quadrature formula]\label{Ass:quad_form}
The quadrature formulae $\left\{\QQ^k\right\}_{k\geq 0}$ are random, unbiased, and statistically independent. Further, for any $q\in [2,\infty)$ there is a normed vector space of functions denoted by $H_q(\Omega;Z)$ and a constant $\widetilde{C}_{\QQ,q}>0$ such that for every $f\in H_q(\Omega;Z)$ and $k\geq 0$,
\begin{align}\label{ass_quad_formula}
\left\|\E\LQ f\RQ-\QQ^k \LQ f\RQ\right\|_{L^q(\Omega;Z)}:= \E\LQ \left\|\E\LQ f\RQ-\QQ^k \LQ f\RQ\right\|^q_Z\RQ^{\frac{1}{q}} \leq \widetilde{C}_{\QQ,q} \gamma_k \|f\|_{H_q(\Omega;Z)},
\end{align}
where $\left\{\gamma_k\right\}_{k\geq 0}$ is a decreasing null sequence.
\end{assumption}
The outer expectation operator in \eqref{ass_quad_formula} must be interpreted as an expectation over the randomness of the quadrature formula, while the inner expectation is taken with respect to underlying probability measure $\mathbb{P}$ in \eqref{eq:random_PDE_weak} and \eqref{eq:OCP_model_problem}.
Note that the Monte Carlo method satisfies \eqref{ass_quad_formula} with $\gamma_k=N_k^{-1/2}$ and $H_q(\Omega;Z)=L^q(\Omega;Z)$, where for $q>2$ the proof is based on the so-called Rademacher sequences, see, e.g., \cite{ledoux2013probability}. We refer to Remark \ref{remark:power_p} for a technical discussion on why Assumption \ref{Ass:quad_form} is formulated for a generic exponent $q\geq 2$.

A deterministic quadrature formula can as well be embedded into \eqref{ass_quad_formula}. In this case, we can simplify the assumption, postulating that the existance of a normed vector space of functions $H(\Omega;Z)$, equipped with a norm $\|\cdot\|_{H(\Omega;Z)}$, such that if $f\in H(\Omega;Z)$ then
\begin{equation}\label{ass_quad_formula_det}
\left\|\E\LQ f\RQ-\QQ^k \LQ f\RQ\right\|_Z \leq \widetilde{C}_{\QQ} \gamma_k \|f\|_{H(\Omega;Z)},
\end{equation}
where $\widetilde{C}_{\QQ}>0$.
As concrete examples, if the probability space can be parametrized by a vector $\xib$ of $M$ random variables assuming values on a bounded subset $\Gamma$ of $\setR^M$, Quasi-Monte Carlo methods satisfy \eqref{ass_quad_formula_det} where $\gamma_k =N_k^{-1} (\log N_k)^M$ and $H(\Omega;Z)$ denotes the space of functions with bounded Hardy-Krause variation \cite{owen2005multidimensional}. Alternatively, if $H(\Omega;Z)$ denotes the space of functions that admit an analytic extension on an open subset $\Sigma\subset \mathbb{C}^M$ containing $\Gamma$, we may obtain the exponential convergence rate $\gamma_k=\exp(-b N_{k}^{1/M})$, with norm $\|\cdot\|_{C^0(\Sigma;Z)}$, using tensorized Gaussian quadrature formulae.

For the linear-quadratic OCP \eqref{eq:OCP_model_problem}, standard calculations (see, e.g., \cite{martin2021complexity,doi:10.1137/19M1294952}) show that, for a given set of quadrature points, it holds that
\begin{equation}\label{eq:bound_quadrature_given_set}
\|\us-\us_{\infty,k}\|_U\leq C \|\E\LQ p(\us)\RQ -\QQ^{k}\LQ p(\us)\RQ\|_Z,
\end{equation}
where the constant $C=C(\nu,\mathcal{B})$ depends only on the regularization parameter and on the control operator.
By taking first the $q$-th power of \eqref{eq:bound_quadrature_given_set} and then the expectation with respect to the randomness of the quadrature, we conclude that under Assumption \ref{Ass:quad_form} it holds that
\begin{equation}\label{eq:bound_quadrature_error}
\|\us-\us_{\infty,k}\|_{L^q(\Omega;U)} \leq C_{\QQ,q} \gamma_k \|p(\us)\|_{H_q(\Omega;Z)},
\end{equation}
for a constant $C_{\QQ,q}>0$ provided that $p(\us)\in H_q(\Omega;Z)$. Equation \eqref{eq:bound_quadrature_error} is a bound on the error on the control due to the sole discretization in probability. Similarly, if $p_\ell(\us)\in H_q(\Omega;Z)$ the same arguments lead to 
\begin{equation}\label{eq:bound_quadrature_error_semidiscretized}
\|\us_{\ell,\infty}-\us_{\ell,k}\|_{L^q(\Omega;U)} \leq C_{\QQ,q} \gamma_k \|p_\ell(\us)\|_{H_q(\Omega;Z)}.
\end{equation}
Corresponding results for deterministic quadrature formulae hold true in the $\|\cdot\|_U$ norm using directly \eqref{ass_quad_formula_det} in \eqref{eq:bound_quadrature_given_set}.

Next, we formulate an assumption concerning the spatial approximation. In order to postulate some rate of convergence of the finite element discretizations, we suppose that the state and adjoint variables are more regular than general elements of $V$. Besides requiring that $\B$ defines a functional on a Hilbert space $Z$ containing $V$, we further assume that $\text{Im}\C^\star\subset W^\prime$, i.e., $\C^\star:H^\prime\rightarrow W^\prime$, for some Hilbert space $W$ such that $W\supsetneq V$.

\begin{assumption}[Finite element approximations]\label{Ass:fem}
There are subspaces $K$ and $\widetilde{K}$ of $V$ such that $\text{Im}(S^\w \B)\subset K$ and $\text{Im}(S^{\w,\star}\C^\star \Lambda_H) \subset \widetilde{K}$, and these operators are continuous from $U$ to $K$ and from $H$ to $\widetilde{K}$, respectively. Further,
the operators $S^\w_\ell$ and $S^{\w,\star}_\ell$ satisfy for $\PP\text{-a.e. } \w$,
\begin{align}\|(S^\w-S^\w_\ell)\B u\|_W &\leq C_a(\w) h_{\ell}^{\alpha_W}\|u\|_U,\quad \forall u\in U,\label{ass:forward}\\
\|(S^{\w,\star}-S^{\w,\star}_\ell)\C^\star \Lambda_{H}h\|_Z&\leq  C^\star_a(\w) h_{\ell}^{\alpha_Z}\|h\|_H,\quad \forall h\in H,\label{ass:adjoint}
\end{align}
for given $\alpha_W,\;\alpha_Z >0$ and positive constants $C_a(\w)$ and $C^\star_a(\w)$, such that $C_a,C_a^\star\in L^q(\Omega;\mathbb{R})$ for any $q\in [1,\infty)$.
\end{assumption}

\begin{example}
Consider the linear-quadratic OCP 
\[
\min_{u\in L^2(\D) ,y\in L^2(\Omega;H^1_0(\D))} \widehat{J}(y,u):= \frac{1}{2}\E\LQ \int_\D (y(\mathbf{x},\w)-y_d(\mathbf{x}))^2\;d\mathbf{x}\RQ +\frac{\nu}{2}\int_D u^2(\mathbf{x})\;d\mathbf{x},
\]
with distributed control and distributed observations, and subjected to a random linear elliptic PDE with homogeneous Dirichlet boundary conditions whose weak form, for $\PP$-a.e. $\w$, is: Find $y(\cdot,\w)\in H^1_0(\D)$ such that
\[\int_{\D} \kappa(\mathbf{x},\w)\nabla y(\mathbf{x},\w)\cdot\nabla v(\mathbf{x})\; d\mathbf{x}=\int_{\D} u(\mathbf{x})v(\mathbf{x})\; d\mathbf{x},\quad \forall v\in H^1_0(\D),\]
where $\kappa$ is a random diffusion coefficient. In this setting we have $U=L^2(\D)$, $V=H_0^1(\D)$, $Z=L^2(\D)$, $H=L^2(\D)$ and $W=L^2(\D)$. $\B$ is the embedding operator from $L^2(\D)$ to $(H^1_0(\D))^{-1}$, while $\C$ is the identity operator on $L^2(\D)$.
The Lipschitzianity of $\kappa$ and a $C^{1,1}$ domain are sufficient to have $H^2$-regularity of the state and adjoint variables (i.e., $K=\widetilde{K}=H^2(\D)$), which permits to satisfy \eqref{ass:forward} and \eqref{ass:adjoint} with $\alpha_W=\alpha_Z=2$ using continuous piecewise linear finite elements.
\end{example}
Assumption \ref{Ass:fem} is standard in finite element approximation analysis and the condition on the integrability on $C_a$ and $C_a^\star$ is sufficiently general to cover also log-normal fields \cite{charrier2012strong}.
We now recall some technical results that follow from Assumption \ref{Ass:fem} and that are relevant to study the error associated to $\umq{L}$. To simplify the notation, we set $C_\C:=\|\C\|_{\mathcal{L}(W,H)}$ and denoted the operator $\C^\star \Lambda_H \C:W\rightarrow W^\prime$ by $T$ and its continuity constant by $C_T$.
\begin{lemma}\label{lemma:pointwise}
Let Assumption \ref{Ass:fem} holds and set $\alpha:=\min(\alpha_W,\alpha_Z)$. Then,
\begin{align}
&\|(S^{\w,\star}\C^\star\Lambda_H\C S^{\w}\B-S_\ell^{\w,\star}\C^\star\Lambda_H\C S_\ell^{\w}\B) u\|_Z \leq A(\w) h_{\ell}^{\alpha}\|u\|_U,\quad \PP\text{-a.e. in }\Omega,\;\forall u\in U,\label{eq:difference_operators}\\
&\|p^\w(u)-p_\ell^\w(u)\|_Z \leq B(\w)  h_\ell^{\alpha}\left(\|u\|_U+\|y_d\|_H\right),\quad \PP\text{-a.e. in }\Omega,\;\forall u\in U,\;\forall y_d \in H,\label{eq:convergence_spatial_adjoint_same_u}
\end{align}
where
\begin{align*}
A(\w)&:=C_{S}(\w)\left(C_a^\star(\w)C_{\mathcal{C}}C_{\B}+C_{T}C_a(\w)\right),\qquad B(\w):=C_a^\star(\w)+ A(\w),
\end{align*}
and $A,B\in L^q(\Omega;\setR)$ for any $q\in [1,\infty)$.
Further, there exists a positive constant $C_d:=\frac{C_{\B}\|B\|_{L^2(\Omega)}}{\nu}$ and a random variable $G(\w):=\left(C_a^\star(\w)+C_S^2(\w)C_{T}C_{\B}C_d+A(\w)\right)$, $G\in L^q(\Omega;\setR)$  $\forall\;q \in [1,\infty)$, such that
\begin{align}
&\|\us-\us_{\ell,\infty}\|_U  \leq C_d h_{\ell}^{\alpha}\left(\|\us\|_U+\|y_d\|_H\right),\label{ass:mesh_ref_control}\\
&\|p^\w(\us)-p^\w_\ell(\us_{\ell,\infty})\|_Z  \leq G(\w) h_{\ell}^{\alpha} \left( \|\us\|_U+\|y_d\|_H\right),\quad \PP\text{-a.e. in }\Omega.\label{eq:convergence_spatial_adjoint}
\end{align}
\end{lemma}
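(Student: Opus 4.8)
The plan is to prove the four estimates in the order in which they are stated, since each one feeds the next. I would first establish the operator bound \eqref{eq:difference_operators} by the standard add-and-subtract device: inserting the mixed operator $S^{\w,\star}\C^\star\Lambda_H\C S_\ell^\w\B$, the composite difference splits as
\[
S^{\w,\star}\C^\star\Lambda_H\C(S^\w-S_\ell^\w)\B + (S^{\w,\star}-S_\ell^{\w,\star})\C^\star\Lambda_H\C S_\ell^\w\B.
\]
For the first summand I would combine the continuity of $S^{\w,\star}$ (constant $C_S(\w)$) and of $T=\C^\star\Lambda_H\C$ (constant $C_T$) with the forward estimate \eqref{ass:forward}, yielding the contribution $C_S(\w)C_T C_a(\w)h_\ell^{\alpha_W}\|u\|_U$; for the second I would apply the adjoint estimate \eqref{ass:adjoint} with $h=\C S_\ell^\w\B u$ and bound $\|\C S_\ell^\w\B u\|_H\le C_{\C}C_S(\w)C_{\B}\|u\|_U$ via the continuity of $\C$, $S_\ell^\w$, $\B$ and the embedding $V\hookrightarrow W$, giving $C_S(\w)C_a^\star(\w)C_{\C}C_{\B}h_\ell^{\alpha_Z}\|u\|_U$. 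Setting $\alpha=\min(\alpha_W,\alpha_Z)$ and collecting the two terms produces the constant $A(\w)$. The pointwise adjoint estimate \eqref{eq:convergence_spatial_adjoint_same_u} then follows at once: writing $p^\w(u)-p_\ell^\w(u)=(S^{\w,\star}-S_\ell^{\w,\star})\C^\star\Lambda_H y_d-(S^{\w,\star}\C^\star\Lambda_H\C S^\w\B-S_\ell^{\w,\star}\C^\star\Lambda_H\C S_\ell^\w\B)u$, I bound the first piece with \eqref{ass:adjoint} applied to $h=y_d$ and the second with \eqref{eq:difference_operators}, and the triangle inequality gives the factor $B(\w)=C_a^\star(\w)+A(\w)$ times $h_\ell^\alpha(\|u\|_U+\|y_d\|_H)$.

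The control estimate \eqref{ass:mesh_ref_control} is the first delicate point, and it is where the hypothesis $\Uad\equiv U$ enters. Comparing the fixed-point relations \eqref{eq:optimality_condition} and \eqref{eq:opt_condition} through the nonexpansiveness of $\Pro$ would produce the term $\|p_\ell(\us)-p_\ell(\us_{\ell,\infty})\|_Z$ and hence a circular bound for $\|\us-\us_{\ell,\infty}\|_U$. I would instead exploit the $\nu$-strong convexity of $J_{\ell,\infty}$. Since $J_{\ell,\infty}'(\us_{\ell,\infty})=0$ and $J'(\us)=0$ in the unconstrained case, strong convexity yields $\nu\|\us-\us_{\ell,\infty}\|_U^2\le \langle J_{\ell,\infty}'(\us)-J_{\ell,\infty}'(\us_{\ell,\infty}),\us-\us_{\ell,\infty}\rangle_{U',U}=\langle \B^\star\E\LQ p(\us)-p_\ell(\us)\RQ,\us-\us_{\ell,\infty}\rangle_{U',U}$. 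The decisive feature is that both adjoints are now evaluated at the \emph{same} control $\us$, so after Cauchy--Schwarz and $\|\B^\star\|\le C_{\B}$ I can invoke \eqref{eq:convergence_spatial_adjoint_same_u} with $u=\us$, divide by $\|\us-\us_{\ell,\infty}\|_U$, and take the expectation; bounding $\E\LQ B(\w)\RQ\le\|B\|_{L^2(\Omega)}$ gives $C_d=\tfrac{C_{\B}\|B\|_{L^2(\Omega)}}{\nu}$.

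Finally, \eqref{eq:convergence_spatial_adjoint} is the main obstacle, because it compares adjoints at two \emph{different} controls, $\us$ and $\us_{\ell,\infty}$, so \eqref{eq:convergence_spatial_adjoint_same_u} cannot be applied directly. I would resolve this by inserting the intermediate quantity $p_\ell^\w(\us)$ and splitting $p^\w(\us)-p_\ell^\w(\us_{\ell,\infty})=[p^\w(\us)-p_\ell^\w(\us)]+[p_\ell^\w(\us)-p_\ell^\w(\us_{\ell,\infty})]$. The first bracket is controlled by \eqref{eq:convergence_spatial_adjoint_same_u} (same control), contributing $B(\w)$. For the second bracket I would use that $u\mapsto p_\ell^\w(u)$ is affine with linear part $-S_\ell^{\w,\star}\C^\star\Lambda_H\C S_\ell^\w\B$, whose $U\to Z$ operator norm is bounded by $C_S^2(\w)C_T C_{\B}$ (two discrete solution operators together with $T$, $\B$, and the relevant embeddings), and then substitute the control error \eqref{ass:mesh_ref_control} to obtain the contribution $C_S^2(\w)C_T C_{\B}C_d\,h_\ell^\alpha(\|\us\|_U+\|y_d\|_H)$. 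Summing the two brackets and recalling $B(\w)=C_a^\star(\w)+A(\w)$ gives exactly $G(\w)=C_a^\star(\w)+C_S^2(\w)C_T C_{\B}C_d+A(\w)$. The $L^q$-integrability of $A$, $B$ and $G$ for every finite $q$ is then a routine consequence of the assumed $L^q$-integrability (for all $q\in[1,\infty)$) of $C_S$, $C_a$ and $C_a^\star$, via the generalized Hölder inequality applied to their products, noting that $C_S^2\in L^q(\Omega;\setR)$ because $C_S\in L^{2q}(\Omega;\setR)$.
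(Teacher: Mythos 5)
Your proposal follows essentially the same route as the paper's proof: the same add--and--subtract splittings for \eqref{eq:difference_operators} and \eqref{eq:convergence_spatial_adjoint_same_u} (you insert the intermediate operator $S^{\w,\star}\C^\star\Lambda_H\C S_\ell^{\w}\B$ where the paper inserts $S_\ell^{\w,\star}\C^\star\Lambda_H\C S^{\w}\B$, yielding the identical constant $A(\w)$), the same comparison of optimality conditions reducing \eqref{ass:mesh_ref_control} to the adjoint error at the \emph{common} control $\us$, and the same three contributions $C_a^\star$, $A$, $C_S^2C_TC_\B C_d$ assembling $G(\w)$ in \eqref{eq:convergence_spatial_adjoint} (you merely group $C_a^\star+A$ into $B$ by invoking \eqref{eq:convergence_spatial_adjoint_same_u}, rather than peeling off the $y_d$ term first).

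The one point that needs repair is your claim that \eqref{ass:mesh_ref_control} ``is where the hypothesis $\Uad\equiv U$ enters.'' The lemma carries no such hypothesis --- that restriction is introduced only later, in Lemma \ref{lemma:surplus} --- and the estimate is needed for general closed convex $\Uad$ (it feeds \eqref{eq:error_boundSL} and the constrained experiments). Your derivation via $J^\prime(\us)=0$ and $J^\prime_{\ell,\infty}(\us_{\ell,\infty})=0$ therefore proves a strictly weaker statement. The fix is the paper's: test the variational inequality for $\us$ with $w=\us_{\ell,\infty}$ and the one for $\us_{\ell,\infty}$ with $w=\us$, sum the two, and add and subtract $\langle J_{\ell,\infty}^\prime(\us),\us-\us_{\ell,\infty}\rangle_{U',U}$; strong monotonicity of $J_{\ell,\infty}^\prime$ then gives $\nu\|\us-\us_{\ell,\infty}\|_U^2\le\langle \B^\star\E\LQ p(\us)-p_\ell(\us)\RQ,\us-\us_{\ell,\infty}\rangle_{U',U}$, after which your Cauchy--Schwarz step and \eqref{eq:convergence_spatial_adjoint_same_u} at $u=\us$ go through unchanged and produce the same $C_d$. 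Everything else in your proposal is correct.
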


\begin{proof}
The proof of the first inequality relies on the triangle inequality, the continuity of $S^{\w}$, $S^{\w,\star}_{\ell}$, $T$, $\mathcal{C}$ and Assumption \eqref{Ass:fem},
\begin{align*}
\|S^{\w,\star}\C^\star\Lambda_H\C S^{\w}\B u-S_\ell^{\w,\star}\C^\star\Lambda_H\C S_\ell^{\w}\B u\|_Z &=\|(S^{\w,\star}-S^{\w,\star}_\ell)\C^\star\Lambda_H\C S^{\w}\B u\|_Z\\
&+\|S_\ell^{\w,\star}\C^\star\Lambda_H\C (S_\ell^{\w}-S^{\w})\B u\|_Z\nonumber\\
&\leq C_a^\star(\w)\|\C\|_{\mathcal{L}(V,H)}C_S(\w)\|\B\|_{\mathcal{L}(U,V^\prime)} h_\ell^{\alpha_Z}\|u\|_U\\
& + \|S_\ell^{\w,\star}\|_{\mathcal{L}(W^\prime,Z)}C_{T}C_a(\w) h_\ell^{\alpha_W}\|u\|_U\\
&\leq C_a^\star(\w)C_{\C}C_S(\w)C_{\B}h_{\ell}^{\alpha_z}\|u\|_U\\
&+C_S(\w)C_TC_a(\w)h_{\ell}^{\alpha_W}\|u\|_U.
\end{align*}
The second inequality is a direct consequence since 
\begin{align*}
\|p^\w(u)-p_\ell^\w(u)\|_Z&=\|S^{\w,\star}\C^\star\Lambda_H (y_d-\C S^{\w}\B u)-S_\ell^{\w,\star}\C^\star\Lambda_H (y_d-\C S_\ell^{\w}\B u)\|_Z\\
&\leq \|(S^{\w,\star}-S^{\w,\star}_{\ell})(\C^\star\Lambda_Hy_d)\|_Z+\|S^{\w,\star}\C^\star\Lambda_H\C S^{\w}\B u-S_\ell^{\w,\star}\C^\star\Lambda_H\C S_\ell^{\w}\B u\|_V\\
&\leq C_a^\star(\w) h_{\ell}^{\alpha_Z}\|y_d\|_H+ A(\w) h_{\ell}^{\alpha}\|u\|_U.
\end{align*}
The integrability of $A,B$ follows directly from the assumptions on $C_S,C_a$ and $C^\star_a$.
To prove \eqref{ass:mesh_ref_control} we consider the two optimality conditions satisfied by $\us$ and $\us_{\ell,\infty}$, namely
\[\langle J^\prime(\us),v-\us\rangle_{U',U} \geq 0,\qquad \langle J^\prime_{\ell,\infty}(\us_{\ell,\infty}),v-\us_{\ell,\infty}\rangle_{U',U} \geq 0,\qquad \forall v\in \Uad.\]
Choosing $v=\us_{\ell,\infty}$ and $v=\us$ in the two inequalities respectively, summing them up and adding and subtracting $\langle J^\prime(\us_{\ell,\infty}),\us-\us_{\ell,\infty}\rangle_{U',U}$, standard arguments (see, e.g, \cite[Chapter 3]{hinze2008optimization}) lead to the bound
\[\nu\|\us-\us_{\ell,\infty}\|_U^2\leq \E\LQ (\mathcal{B}^\star \left(p^\w_\ell(\us)-p^\w(\us)\right),\us-\us_{\ell,\infty})_U\RQ.\]
Young's inequality and \eqref{eq:convergence_spatial_adjoint_same_u} finally yield
\[\|\us-\us_{\ell,\infty}\|_U^2\leq \frac{C_{\B}^2}{\nu^2}\E\LQ \|p^\w(\us)-p_\ell^\w(\us)\|_Z^2\RQ\leq  \frac{C_{\B}^2\E\LQ B^2(\w)\RQ}{\nu^2}h_{\ell}^{2\alpha}\left(\|\us\|_U+\|y_d\|_H\right)^2,
\]
from which the third claim follows. 
To prove the last inequality we observe that  
\begin{align*}
\|p^\w(\us)-p_\ell^\w(\us_{\ell,\infty})\|_Z&=\|S^{\w,\star}\C^\star\Lambda_H (y_d-\C S^{\w}\B \us)-S_\ell^{\w,\star}\C^\star\Lambda_H (y_d-\C S_\ell^{\w}\B \us_{\ell,\infty})\|_Z\\
&\leq C_a^\star(\w)h_{\ell}^{\alpha_Z}\|y_d\|_H +  \|S^{\w,\star}\C^\star\Lambda_H\C S^{\w}\B \us-S_\ell^{\w,\star}\C^\star\Lambda_H\C S_\ell^{\w}\B \us_{\ell,\infty}\|_Z,
\end{align*}
and then we bound the second term using \eqref{ass:mesh_ref_control},
\begin{align*}
\|S^{\w,\star}\C^\star\Lambda_H\C S^{\w}\B \us-S_\ell^{\w,\star}\C^\star\Lambda_H\C S_\ell^{\w}\B \us_{\ell,\infty}\|_Z &\leq 
\|S_\ell^{\w,\star}\C^\star\Lambda_H\C S_\ell^{\w}\B( \us-\us_{\ell,\infty})\|_Z\\
&+\|(S^{\w,\star}\C^\star\Lambda_H\C S^{\w} -S_\ell^{\w,\star}\C^\star\Lambda_H\C S_\ell^{\w})\B \us\|_Z\\
& \leq \left(C^2_S(\w)C_TC_{\B}C_d+ A(\w)\right) h_\ell^{\alpha}\left(\|\us\|_U+\|y_d\|_H\right).
\end{align*}
\end{proof}
Equation \eqref{ass:mesh_ref_control} provides a bound on the first term of \eqref{eq:lemma_error_statement}. It states that the solution of the semi-discrete optimal control converges to the full continuous one with a rate $\alpha$ determined by the approximation properties of the discrete state and adjoint solution operators in the $\|\cdot\|_W$ and $\|\cdot\|_Z$ norms.  Together, \eqref{eq:bound_quadrature_error_semidiscretized} and \eqref{ass:mesh_ref_control} lead to a bound for the error associated to the solution $\us_{\ell,k}$ of \eqref{eq:OCP_kell}. Specifically it holds that
\begin{equation}\label{eq:error_boundSL}
\E\LQ \|\us-\us_{k,\ell}\|_U^2\RQ \leq 2\|\us-\us_{\ell,\infty}\|_U^2 +2 \E\LQ \|\us_{\ell,\infty}-\us_{\ell,k}\|_U^2\RQ \leq 2C_1^2 h_{\ell}^{2\alpha} + 2C_2^2 \gamma^2_k,
\end{equation}
for two positive constant $C_1$ and $C_2$.

To analyze the error associated to the multilevel approximation $\umq{L}$ we need a further mixed-regularity assumption. In particular, we 
require \eqref{eq:convergence_spatial_adjoint} to hold not only pointwise for $\PP$-a.e. $\omega$, but also in the norm of the vector space of functions appearing in the assumption on the quadrature formula.
Notice that for Monte Carlo methods (for which we can take $H_q(\Omega;Z)=L^q(\Omega;Z)$, $q\in [2,\infty)$), 
this is trivially satisfied if $G\in H_q(\Omega;\setR)$. Similarly, if the solution operator and its adjoint (both at the continuous and discrete level) are holomorphic in a region $\Sigma \supset \Gamma$, it is sufficient to require $G\in C^0(\Sigma;\setR)$. Other quadrature rules, such as Quasi-Monte Carlo methods, may lead to $H(\Omega;Z)$-norms that involve the derivatives of the solution operators with respect to the random parameters, preventing such a straight characterization.
\begin{assumption}[Mixed-regularity]\label{ass:mixed_regularity}
There exists a $\qbar\in (2,\infty)$ such that for every $u\in U$ and $q\in [2,\qbar]$, $p(u)$ and $p_{\ell}(u)$ belong to $H_q(\Omega;Z)$.
Further, there exist a positive rate $\widetilde{\alpha}$ and a positive constant $C_{H,q}$ such that
\begin{equation}\label{ass:mixed}
\|p(\us)-p_{\ell}(\us_{\ell,\infty})\|_{H_q(\Omega;Z)} \leq C_{H,q} h_{\ell}^{\widetilde{\alpha}}\left( \|\us\|_U+\|y_d\|_H\right).
\end{equation}
\end{assumption}
In the case of a deterministic quadrature formula, we assume \eqref{ass:mixed} holds true in the corresponding $H(\Omega;Z)$ norm.

We are now ready to formulate the next Lemma which is key to our analysis. It will permit to control the third term of the error estimate \eqref{eq:lemma_error_statement}.
\begin{lemma}\label{lemma:surplus}
Let Assumptions \ref{Ass:quad_form}, \ref{Ass:fem} and \ref{ass:mixed_regularity} hold, set $\beta=\min\left(\alpha,\widetilde{\alpha}\right)$, and assume further that $\Uad=U$. Then for any $q\in [2,\qbar)$, 
there exist a constant $C=C(q,\qbar,\us,y_d)$ such that
\begin{equation}\label{eq:lemma}
\E\LQ \|\us-\us_{\ell,\infty}-\us_{\infty,k}+\us_{\ell,k}\|^q_U\RQ \leq C   h_{\ell}^{q\beta}\gamma^q_k,
\end{equation}
for every $\ell,k\geq 0$.
\end{lemma}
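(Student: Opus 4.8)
The plan is to exploit that, since $\Uad=U$, the projection $\Pro$ is the identity, and the adjoint depends \emph{affinely} on the control, so that each optimality condition becomes a linear equation for the corresponding minimizer. Writing $p^\w(u)=r^\w-M^\w u$ with $M^\w:=S^{\w,\star}\C^\star\Lambda_H\C S^\w\B$ and $r^\w:=S^{\w,\star}\C^\star\Lambda_H y_d$ (and analogously $p_\ell^\w(u)=r_\ell^\w-M_\ell^\w u$), the four minimizers $\us,\us_{\ell,\infty},\us_{\infty,k},\us_{\ell,k}$ each solve an equation $Au=b$, where $A=\nu I+\Lambda_U^{-1}\B^\star\,\QQ[\,M\,]$ and $b=\Lambda_U^{-1}\B^\star\,\QQ[\,r\,]$, with $\QQ\in\{\E,\QQ^k\}$ and $(M,r)\in\{(M,r),(M_\ell,r_\ell)\}$ according to the level. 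Denoting by $A_{00}=\nu I+\Lambda_U^{-1}\B^\star\E[M]$ the operator of the exact problem, the added term is the (symmetric, nonnegative) reduced Hessian of the observation functional, so $A_{00}$ is coercive with constant $\nu$ and $\|A_{00}^{-1}\|_{\mathcal L(U)}\le 1/\nu$. First I would establish this algebraic set-up.

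Second, setting $\Delta:=\us-\us_{\ell,\infty}-\us_{\infty,k}+\us_{\ell,k}$ and organizing the four equations into a discrete mixed second difference (adding and subtracting $A_{00}$ in each equation) yields the key identity
\begin{equation*}
A_{00}\Delta=\delta_{sq}b-\delta_{sq}A\,\us_{\ell,k}+\delta_s A\,(\us_{\ell,\infty}-\us_{\ell,k})+\delta_q A\,(\us_{\infty,k}-\us_{\ell,k}),
\end{equation*}
where $\delta_s A=\Lambda_U^{-1}\B^\star\E[M_\ell-M]$, $\delta_q A=\Lambda_U^{-1}\B^\star(\QQ^k-\E)[M]$, $\delta_{sq}A=\Lambda_U^{-1}\B^\star(\QQ^k-\E)[M_\ell-M]$ and $\delta_{sq}b=\Lambda_U^{-1}\B^\star(\QQ^k-\E)[r_\ell-r]$. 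Using $(r_\ell-r)-(M_\ell-M)u=p_\ell(u)-p(u)$, the first two terms collapse to $-\Lambda_U^{-1}\B^\star(\QQ^k-\E)[p(\us_{\ell,k})-p_\ell(\us_{\ell,k})]$. After applying $A_{00}^{-1}$ it then suffices to bound each of the (now three) contributions in $L^q(\Omega;U)$ over the quadrature randomness by $C\,h_\ell^{\beta}\gamma_k$.

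Third, I would dispatch the two benign contributions. The term $\delta_s A\,(\us_{\ell,\infty}-\us_{\ell,k})$ pairs a \emph{deterministic} operator of size $O(h_\ell^{\alpha})$ (by \eqref{eq:difference_operators} integrated in $\w$) with the quadrature error $\|\us_{\ell,\infty}-\us_{\ell,k}\|_{L^{q}(\Omega;U)}=O(\gamma_k)$ from \eqref{eq:bound_quadrature_error_semidiscretized}, giving $O(h_\ell^{\alpha}\gamma_k)$. For the collapsed term I would use the affine splitting $p(\us_{\ell,k})-p_\ell(\us_{\ell,k})=\big(p(\us_{\ell,\infty})-p_\ell(\us_{\ell,\infty})\big)-(M-M_\ell)(\us_{\ell,k}-\us_{\ell,\infty})$, so that the quadrature error acts on the \emph{deterministic} function $p(\us_{\ell,\infty})-p_\ell(\us_{\ell,\infty})$: Assumption \ref{Ass:quad_form} supplies the factor $\gamma_k$, while Assumption \ref{ass:mixed_regularity}, combined with a triangle step that replaces $\us_{\ell,\infty}$ by $\us$ at the cost of $\|M(\us-\us_{\ell,\infty})\|_{H_q}=O(h_\ell^{\alpha})$ via \eqref{ass:mesh_ref_control}, gives the $H_q$-size $O(h_\ell^{\beta})$; the residual coupled piece already carries $\|\us_{\ell,k}-\us_{\ell,\infty}\|_U=O(\gamma_k)$ and may be bounded crudely. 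Here I would split Hölder exponents $1/q=1/q_1+1/q_2$ with $q_1\in(q,\qbar]$, which is precisely why the statement requires $q<\qbar$: it leaves room to place the factors governed by Assumption \ref{ass:mixed_regularity} at an admissible exponent.

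The main obstacle is the last term $\delta_q A\,(\us_{\infty,k}-\us_{\ell,k})=\Lambda_U^{-1}\B^\star(\QQ^k-\E)[M]\,(\us_{\infty,k}-\us_{\ell,k})$: the random quadrature error of the operator $M$ multiplies the spatial error $\us_{\infty,k}-\us_{\ell,k}$, which is itself random and statistically \emph{correlated} with $\QQ^k$, so Assumption \ref{Ass:quad_form} cannot be invoked for a fixed integrand — this is exactly the nonstandard coupling anticipated in Section \ref{sec:multilevelquadrature_formulae}. Bounding the quadrature error crudely would forfeit the factor $\gamma_k$ and leave only $O(h_\ell^{\alpha})$. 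Instead, for each realization of the nodes the term equals the operator-valued quadrature error $(\QQ^k-\E)[M]\in\mathcal L(U,Z)$ applied to the fixed vector $\us_{\infty,k}-\us_{\ell,k}$, so that $\|\delta_q A\,(\us_{\infty,k}-\us_{\ell,k})\|_U\le C_{\B}\,\|(\QQ^k-\E)[M]\|_{\mathcal L(U,Z)}\,\|\us_{\infty,k}-\us_{\ell,k}\|_U$. An operator-valued version of Assumption \ref{Ass:quad_form}, which in the Monte Carlo case follows from the uniform integrability $\|M^\w\|_{\mathcal L(U,Z)}\le C_S^2(\w)C_TC_{\B}\in L^{q_1}(\Omega)$, then yields $\|(\QQ^k-\E)[M]\|_{\mathcal L(U,Z)}=O(\gamma_k)$ in $L^{q_1}$, while a spatial-error estimate at fixed quadrature level (the analogue of \eqref{ass:mesh_ref_control} with $\QQ^k$ in place of $\E$, valid because the weights are positive and sum to one) gives $\|\us_{\infty,k}-\us_{\ell,k}\|_U=O(h_\ell^{\alpha})$ in $L^{q_2}$. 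A final application of Hölder produces $O(\gamma_k h_\ell^{\alpha})\le O(\gamma_k h_\ell^{\beta})$, and collecting all three contributions with $\beta=\min(\alpha,\widetilde{\alpha})$ establishes \eqref{eq:lemma}.
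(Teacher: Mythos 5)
Your algebraic set-up is sound: the mixed second-difference identity for $A_{00}\Delta$ is correct, and your treatment of the first two contributions (the collapsed term $(\QQ^k-\E)\LQ p_\ell(\us_{\ell,k})-p(\us_{\ell,k})\RQ$ and the term $\delta_s A\,(\us_{\ell,\infty}-\us_{\ell,k})$) follows essentially the same mechanism as the paper's proof: a quadrature error of a function that is fixed with respect to the sample, whose size is $O(h_\ell^{\beta})$ by Assumption \ref{ass:mixed_regularity} and \eqref{eq:difference_operators}, paired with a H\"older split in the quadrature randomness that uses the room between $q$ and $\qbar$. (Your extra triangle step, which replaces $p(\us)-p_\ell(\us_{\ell,\infty})$ by $p(\us_{\ell,\infty})-p_\ell(\us_{\ell,\infty})$, needs $M(\us-\us_{\ell,\infty})\in H_q(\Omega;Z)$ with norm $O(h_\ell^{\alpha})$; this holds for Monte Carlo but is an additional hypothesis for a general $H_q$. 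That is a minor point.)

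The genuine gap is in the last term $\delta_q A\,(\us_{\infty,k}-\us_{\ell,k})$. You correctly diagnose that the integrand is correlated with the quadrature nodes and retreat to the operator norm, invoking an ``operator-valued version of Assumption \ref{Ass:quad_form}'' to claim $\|(\QQ^k-\E)\LQ M\RQ\|_{\mathcal{L}(U,Z)}=O(\gamma_k)$ in $L^{q_1}$ of the sample. This does not follow from the stated assumptions, and for Monte Carlo it does not follow from $\|M^\w\|_{\mathcal{L}(U,Z)}\le C_S^2(\w)C_TC_{\B}\in L^{q_1}(\Omega)$ either: the $N^{-1/2}$ rate for sample averages of Banach-space-valued random variables requires the space to have Rademacher type $2$, and $\mathcal{L}(U,Z)$ with the operator norm does not have type $2$ when $U$ and $Z$ are infinite dimensional; Assumption \ref{Ass:quad_form} is formulated only for $Z$-valued integrands. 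The paper avoids this term entirely by never inverting $A_{00}$: it tests the difference of the four optimality conditions against $\Delta u$ itself, so that the problematic coupled contribution appears as $\QQ^k\LQ\langle -S_\ell^{\w,\star}\C^\star\Lambda_H\C S_\ell^\w\B\Delta u,\B\Delta u\rangle\RQ=-\QQ^k\LQ\|\C S_\ell^\w\B\Delta u\|_H^2\RQ\le 0$ by positivity of the weights, and can simply be discarded, leaving $\nu\|\Delta u\|_U^2$ on the left and only the two benign terms on the right. This sign (coercivity of the sampled reduced Hessian) argument is the key structural idea your proof is missing; without it, or without a proof of the operator-norm quadrature estimate under some additional hypothesis (e.g.\ a Hilbert--Schmidt bound on $M^\w$), the bound \eqref{eq:lemma} is not established.
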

\begin{proof}
To simplify the notation, from now on we set $\Delta u:= \us-\us_{\ell,\infty}-\us_{\infty,k} +\us_{\ell,k}$.
We recall the optimality conditions satisfied by the different optimal controls appearing in \eqref{eq:lemma}, namely
\begin{align}
\langle J^\prime(\us),w-\us\rangle_{U^\prime,U} &= 0,\quad \forall w\in U,\label{eq:var_ini}\\
\langle J^\prime_{\ell,\infty}(\us_{\ell,\infty}),w-\us_{\ell,\infty}\rangle_{U^\prime,U} &= 0,\quad \forall w\in U,\label{eq:var_ini_ell}\\
\langle J^\prime_{\infty,k}(\us_{\infty,k}),w-\us_{\infty,k}\rangle_{U^\prime,U} &= 0,\quad \forall w\in U,\label{eq:var_ini_k}\\
\langle J^\prime_{\ell,k}(\us_{\ell,k}),w-\us_{\ell,k}\rangle_{U^\prime,U} &= 0,\quad \forall w\in U.\label{eq:var_ini_kell}
\end{align}
By choosing a different test function $w$ in each relation such that the second argument of the duality pairings equals $\Delta u$, and by further summing the four conditions we obtain
\[\langle -J^\prime(\us) +J_{\ell,\infty}^\prime(\us_{\ell,\infty}) + J_{\infty,k}^\prime(\us_{\infty,k}) -J_{\ell,k}^\prime(\us_{\ell,k}), \Delta u\rangle_{U^\prime,U} = 0.\]   
Then,
\begin{equation}
\begin{aligned}
\nu \| \Delta u\|^2_U
&= \langle \nu \Lambda_U \Delta u -J^\prime(\us) +J_{\ell,\infty}^\prime(\us_{\ell,\infty}) + J^\prime_{\infty,k}(\us_{\infty,k}) -J^\prime_{\ell,k}(\us_{\ell,k}), \Delta u\rangle_{U^\prime,U}\\
&=\langle -J^\prime(\us) +J_{\ell,\infty}^\prime(\us_{\ell,\infty}) +J^\prime_{\infty,k}(\us)-J^\prime_{\ell,k}(\us_{\ell,\infty}),\Delta u\rangle_{U^\prime,U} \\
&+\langle \nu\Lambda_U \Delta u -J^\prime_{\infty,k}(\us)+J^\prime_{\ell,k}(\us_{\ell,\infty})+ J^\prime_{\infty,k}(\us_{\infty,k}) -J_{\ell,k}^\prime(\us_{\ell,k}), \Delta u\rangle_{U^\prime,U}\\
&=I+II.
\end{aligned}
\end{equation}
Next, on the one hand, using the expressions of the gradients and the continuity of $\B$,
\begin{equation}\label{eq:lemma_final_first_term}
\begin{aligned}
I &=\langle \B^\star \left(\E\LQ p(\us)-p_\ell(\us_{\ell,\infty})\RQ-\QQ^k \LQ p(\us)-p_{\ell}(\us_{\ell,\infty})\RQ\right),\Delta u\rangle_{U^\prime,U}\\
&\leq C_{\B} \|\left( \E-\QQ^k\right)\LQ p(\us)-p_\ell(\us_{\ell,\infty})\RQ\|_Z \|\Delta u\|_U\\
&\leq \frac{C^2_{\B}}{\nu}\|\left( \E-\QQ^k\right)\LQ p(\us)-p_\ell(\us_{\ell,\infty})\RQ\|^2_Z+ \frac{\nu}{4}\|\Delta u\|_{U}^2.
\end{aligned}
\end{equation}
On the other hand, adding and subtracting $p_\ell(\us)-p_\ell(\us_{\infty,k})$
\begin{equation}
\begin{aligned}
II &=\langle \B^\star \QQ^k\LQ p(\us) -p_\ell(\us_{\ell,\infty})-p(\us_{\infty,k})+p_\ell(\us_{\ell,k})\RQ, \Delta u \rangle_{U^\prime,U}\\
&= \langle  \QQ^k\LQ p(\us)-p(\us_{\infty,k}) - p_\ell(\us)+ p_\ell(\us_{\infty,k})  \RQ, \B\Delta u \rangle_{Z,Z^\prime}\\
&+\langle \QQ^k\LQ  p_\ell(\us)- p_\ell(\us_{\infty,k})  -p_\ell(\us_{\ell,\infty})+p_\ell(\us_{\ell,k})
  \RQ,\B\Delta u \rangle_{Z,Z^\prime}.
  \end{aligned}
\end{equation}  
Using the affinity of the map $u\rightarrow  p_\ell(u)$ and the positivity of the weights of $\QQ^k\LQ\cdot\RQ$, we observe that
\begin{equation*}
\begin{aligned}
\langle \QQ^k\LQ  p_\ell(\us)- p_\ell(\us_{\infty,k})  -p_\ell(\us_{\ell,\infty})+p_\ell(\us_{\ell,k})
  \RQ,\B\Delta u \rangle_{Z,Z^\prime}&=\QQ^k \LQ \langle  -S^{\w,\star}_{\ell}\C^\star\Lambda_H\C S^\w_{\ell} \B \Delta u,\B\Delta u \rangle_{Z,Z^\prime}\RQ\\
  & =\QQ^k \LQ \langle  -\Lambda_H\C S^\w_\ell \B \Delta u,\C S^\w_\ell\B\Delta u \rangle_{H^\prime,H} \RQ \\
 &= \QQ^k \LQ -\|\C S^\w_\ell \B \Delta u\|_H^2 \RQ \\
  &\leq 0.
\end{aligned}
\end{equation*}
Hence, the second term is bounded using \eqref{eq:difference_operators},
\begin{equation}\label{eq:lemma_final_second_term}
\begin{aligned}
II &\leq \langle  \QQ^k\LQ p(\us)-p(\us_{\infty,k}) - p_\ell(\us)+ p_\ell(\us_{\infty,k})  \RQ, \B\Delta u \rangle_{Z,Z^\prime}\\
&\leq C_{\B} \QQ^k \|(S^{\w,\star}\C^\star\Lambda_H \C S^{\w}- S_{\ell}^{\w,\star}\C^\star\Lambda_H \C S_{\ell}^{\w})\mathcal{B}(\us-\us_{\infty,k})\|_Z \| \Delta u \|_U\\
&\leq C_{\B} \QQ^k \LQ A\RQ h_{\ell}^{\alpha}\|\us-\us_{\infty,k}\|_U  \| \Delta u \|_U\\
&\leq \frac{C^2_{\B} \left(\QQ^k\LQ A\RQ\right)^2}{\nu} h_{\ell}^{2\alpha}\|\us-\us_{\infty,k}\|^2_U  +\frac{\nu}{4}\| \Delta u \|^2_U.
\end{aligned}
\end{equation}
Setting $\widetilde{C}:=\frac{2C_\B^2(1+\QQ^k\LQ A\RQ)^2}{\nu^2}$,
\eqref{eq:lemma_final_first_term} and \eqref{eq:lemma_final_second_term} lead to
\begin{equation}\label{eq:intermediate_step}
\|\us-\us_{\ell,\infty}-\us_{\infty,k}+\us_{\ell,k}\|^2_U\leq \widetilde{C}\left(\left\|\left(\mathbb{E}-\QQ^{k}\right)\LQ p(\us)-p_{\ell}(\us_{\ell,\infty})\RQ\right\|_Z^2+h_{\ell}^{2\alpha}\|\us-\us_{\infty,k}\|_U^2\right),
\end{equation}
which holds true for any sample set of quadrature points, and $\widetilde{C}$ does depend on the sample set.
Taking the $q/2$-th power of both sides, $q\in [2,\qbar)$, in \eqref{eq:intermediate_step} yields
\begin{equation*}
\medmuskip=-1mu
\thinmuskip=-1mu
\thickmuskip=-1mu
\nulldelimiterspace=0.9pt
\scriptspace=0.9pt 
\arraycolsep0.9em 
\begin{aligned}
\|\us-\us_{\ell,\infty}-\us_{\infty,k}+\us_{\ell,k}\|^q_U & \leq \widetilde{C}^{\frac{q}{2}}\left(\left\|\left(\mathbb{E}-\QQ^{k}\right)\LQ p(\us)-p_{\ell}(\us_{\ell,\infty})\RQ\right\|_Z^2+h_{\ell}^{2\alpha}\|\us-\us_{\infty,k}\|_U^2\right)^{\frac{q}{2}}\\
&\leq 2^{\frac{q-2}{2}} \widetilde{C}^{\frac{q}{2}}  \left(\left\|\left(\mathbb{E}-\QQ^{k}\right)\LQ p(\us)-p_{\ell}(\us_{\ell,\infty})\RQ\right\|_Z^q+h_{\ell}^{q\alpha}\|\us-\us_{\infty,k}\|_U^q\right),
\end{aligned}
\end{equation*}
where the last step follows from Jensen's inequality being $\frac{q}{2}\geq 1$. We next take the expectation with respect to the sample set and use H\"{o}lder inequality, with $r=\frac{\qbar}{q}>1$ and $r^\prime=\frac{\qbar}{\qbar-q}$, Assumption \ref{Ass:quad_form} and \eqref{eq:bound_quadrature_error},
\begin{equation*}
\begin{aligned}
&\E \LQ \|\us-\us_{\ell,\infty}-\us_{\infty,k}+\us_{\ell,k}\|^q_U\RQ \leq 2^{\frac{q-2}{2}}\bigg(\E\LQ \widetilde{C}^{\frac{q}{2}}\left\|\left(\E-\QQ^{k}\right)\LQ p(\us)-p_\ell(\us_{\ell,\infty})\RQ\right\|^q_Z\RQ\\
&+h_{\ell}^{q\alpha}\E\LQ \widetilde{C}^{\frac{q}{2}} \|\us-\us_{\infty,k}\|^q_U\RQ\bigg)\\
&\leq  2^{\frac{q-2}{2}}\E\LQ \widetilde{C}^{\frac{q\qbar}{2(\qbar-q)}}\RQ^{\frac{\qbar-q}{\qbar}} \bigg(\widetilde{C}_{\QQ,\qbar}^q\gamma_{k}^{q}\| p(\us)-p_\ell(\us_{\ell,\infty})\|^q_{H_{\qbar}(\Omega;Z)} \\
&+h_{\ell}^{q\alpha} C_{\QQ,\qbar}^q \gamma_k^q\|p(\us)\|_{H_{\qbar}(\Omega;Z)}^q\bigg),\\
\end{aligned}
\end{equation*}
where we remark that $\E\LQ \widetilde{C}^{\frac{q\qbar}{2(\qbar-q)}}\RQ<\infty$ since $\frac{q\qbar}{2(\qbar-q)}>1$, $\QQ^k$ is unbiased, $A\in L^r(\Omega;\setR)$ for any $r\in [1,\infty)$ and, hence, using Jensen's inequality,
\[\E\LQ \left(\QQ^k\LQ A\RQ\right)^{\frac{q\qbar}{2(\qbar-q)}}\RQ\leq \E\LQ \QQ^k\LQ A^{\frac{q\qbar}{2(\qbar-q)}}\RQ\RQ=\E\LQ A^{\frac{q\qbar}{2(\qbar-q)}}\RQ<\infty.\]
Finally, Assumption \ref{Ass:quad_form}, , and \eqref{ass:mixed} lead to the claim
\begin{equation*}
\E \LQ \|\us-\us_{\ell,\infty}-\us_{\infty,k}+\us_{\ell,k}\|^q_U\RQ  \leq C \gamma_k^q h_{\ell}^{q\beta},
\end{equation*}
for a suitable constant $C=C(q,\qbar,\us,y_d)$.
\end{proof}

\begin{remark}[Lemma \ref{lemma:surplus} for deterministic quadrature formulae]
For deterministic quadrature formulae, the proof of Lemma \ref{lemma:surplus} is identical until \eqref{eq:intermediate_step}. We then take the square rooth, and it is then sufficient to ask that $\QQ^{k}\LQ A(\w)\RQ$ remains bounded for any $k\geq 0$ (e.g., $A(\w)$ is sufficiently regular so that the quadrature formula converges as $k$ increases), and to use \eqref{ass_quad_formula_det}, \eqref{eq:bound_quadrature_given_set} and \eqref{ass:mixed}.
The final bound reads
\begin{equation}\label{eq:lemma_deterministic}
\|\us-\us_{\ell,\infty}-\us_{\infty,k}+\us_{\ell,k}\|_U\leq C h_{\ell}^\beta\gamma_k,
\end{equation}
for a suitable constant $C=C(\us,y_d)$ and $\beta=\min(\alpha,\widetilde{\alpha})$.
\end{remark}

We are now ready to prove our main convergence result.
\begin{theorem}[Error of the multilevel approximation]\label{thm:error}
Let Assumptions \ref{Ass:quad_form}, \ref{Ass:fem} and \ref{ass:mixed_regularity} hold. Further, assume that $\Uad=U$. Then, the multilevel approximation \eqref{eq:multilevel_approx} satisfies the error bound
\begin{equation}\label{eq:complexity_result}
\E\LQ \|\us-\umq{L}\|^2_U\RQ \leq  2C_1^2 h_{L}^{2\alpha} +C_3^2L\sum_{\ell=0}^L h_{\ell}^{2\beta}\gamma^2_{L-\ell},
\end{equation}
for two positive constants $C_1$ and $C_3$.
\end{theorem}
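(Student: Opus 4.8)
The plan is to substitute the estimates of Lemmas~\ref{lemma:pointwise} and~\ref{lemma:surplus} into the three terms of the error bound \eqref{eq:lemma_error_statement} furnished by Proposition~\ref{lemma:error_bound}. The first term is immediate: by \eqref{ass:mesh_ref_control} one has $\|\us-\us_{L,\infty}\|_U^2\le C_d^2 h_L^{2\alpha}(\|\us\|_U+\|y_d\|_H)^2$, which produces the contribution $2C_1^2 h_L^{2\alpha}$ upon setting $C_1:=C_d(\|\us\|_U+\|y_d\|_H)$. For the second (statistical) term I would apply Assumption~\ref{Ass:quad_form} with $q=2$ to each summand, giving $\E\LQ\|(\E-\QQ^{L-\ell})(p_\ell(\us_{\ell,\infty})-p_{\ell-1}(\us_{\ell-1,\infty}))\|_Z^2\RQ\le \widetilde{C}_{\QQ,2}^2\gamma_{L-\ell}^2\|p_\ell(\us_{\ell,\infty})-p_{\ell-1}(\us_{\ell-1,\infty})\|_{H_2(\Omega;Z)}^2$. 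Writing the argument as $(p_\ell(\us_{\ell,\infty})-p(\us))-(p_{\ell-1}(\us_{\ell-1,\infty})-p(\us))$ and invoking the mixed-regularity bound \eqref{ass:mixed} together with the comparability of consecutive mesh sizes yields $\|p_\ell(\us_{\ell,\infty})-p_{\ell-1}(\us_{\ell-1,\infty})\|_{H_2(\Omega;Z)}\lesssim h_\ell^{\widetilde\alpha}$, hence a contribution $\lesssim\sum_{\ell=0}^L h_\ell^{2\widetilde\alpha}\gamma_{L-\ell}^2\le\sum_{\ell=0}^L h_\ell^{2\beta}\gamma_{L-\ell}^2$ since $\beta\le\widetilde\alpha$ and $h_\ell\le h_0$.

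The crux is the third term, which features a second mixed difference of the \emph{adjoint} variables rather than of the controls. Here I would exploit that, for fixed $\w$, the map $u\mapsto p_\ell^\w(u)$ is affine with linear part $R_\ell^\w:=S_\ell^{\w,\star}\C^\star\Lambda_H\C S_\ell^\w\B$, so that, setting $k=L-\ell$ and $\delta_\ell:=\us_{\ell,\infty}-\us_{\ell,k}$, the argument of the norm equals $-R_\ell^\w\delta_\ell+R_{\ell-1}^\w\delta_{\ell-1}$. Splitting this as $-R_\ell^\w(\delta_\ell-\delta_{\ell-1})-(R_\ell^\w-R_{\ell-1}^\w)\delta_{\ell-1}$ isolates two tractable pieces. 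The first is controlled by recognising that $\delta_\ell-\delta_{\ell-1}=\Delta u_{\ell-1,k}-\Delta u_{\ell,k}$, where $\Delta u_{\ell,k}:=\us-\us_{\ell,\infty}-\us_{\infty,k}+\us_{\ell,k}$ is exactly the quantity estimated in Lemma~\ref{lemma:surplus}; the second is controlled by bounding the operator difference through \eqref{eq:difference_operators}, namely $\|(R^\w-R_\ell^\w)v\|_Z\le A(\w)h_\ell^\alpha\|v\|_U$, and the factor $\delta_{\ell-1}$ through the quadrature estimate \eqref{eq:bound_quadrature_error_semidiscretized}.

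The main obstacle is that, inside $\QQ^{L-\ell}\LQ\cdot\RQ$, the random operators $R_\ell^{\xi_j^{L-\ell}}$ and the control differences $\delta_\ell,\delta_{\ell-1}$ depend on the \emph{same} realisation of the quadrature nodes, so their product cannot be decoupled by unbiasedness alone. I would resolve this with Hölder's inequality in the quadrature probability space: after pulling the ($\omega$-independent) control norms out of $\QQ^{L-\ell}$, I bound the factors $\E\LQ(\QQ^{L-\ell}\LQ\|R_\ell^\w\|^2\RQ)^r\RQ^{1/r}$ and $\E\LQ(\QQ^{L-\ell}\LQ A^2\RQ)^r\RQ^{1/r}$ by a constant, using Jensen's inequality (positive weights summing to one), unbiasedness, and the $L^q(\Omega;\setR)$-integrability of $\|R^\w\|$ and $A$ for all finite $q$; the conjugate factors $\E\LQ\|\delta_\ell-\delta_{\ell-1}\|_U^{2r'}\RQ^{1/r'}\lesssim h_\ell^{2\beta}\gamma_{L-\ell}^2$ and $\E\LQ\|\delta_{\ell-1}\|_U^{2r'}\RQ^{1/r'}\lesssim\gamma_{L-\ell}^2$ follow from Lemma~\ref{lemma:surplus} and \eqref{eq:bound_quadrature_error_semidiscretized} with $q=2r'\in(2,\qbar)$, which is precisely why Assumptions~\ref{Ass:quad_form} and~\ref{ass:mixed_regularity} are phrased for a range of exponents rather than just $q=2$.

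Putting these together gives $\E\LQ\QQ^{L-\ell}\LQ\|{-}R_\ell^\w\delta_\ell+R_{\ell-1}^\w\delta_{\ell-1}\|_Z^2\RQ\RQ\lesssim h_\ell^{2\beta}\gamma_{L-\ell}^2$, where the second split piece contributes at order $h_\ell^{2\alpha}\gamma_{L-\ell}^2\le C h_\ell^{2\beta}\gamma_{L-\ell}^2$ since $\alpha\ge\beta$. Consequently the third term of \eqref{eq:lemma_error_statement} is bounded by $\lesssim L\sum_{\ell=0}^L h_\ell^{2\beta}\gamma_{L-\ell}^2$. Collecting the three contributions, keeping the first as $2C_1^2 h_L^{2\alpha}$ and absorbing the (dominated) second term into the third, produces a single constant $C_3$ and the claimed estimate \eqref{eq:complexity_result}.
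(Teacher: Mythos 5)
Your proposal is correct and follows essentially the same route as the paper: substitute into Proposition \ref{lemma:error_bound}, use \eqref{ass:mesh_ref_control} for the bias, Assumptions \ref{Ass:quad_form} and \ref{ass:mixed_regularity} for the statistical term, and reduce the third term to Lemma \ref{lemma:surplus} plus the operator-difference estimate \eqref{eq:difference_operators}, decoupling the sample-dependent products by H\"older's inequality over the quadrature randomness exactly as in \eqref{eq:proof_complexity2}--\eqref{eq:proof_complexity3}. Your splitting $-R_\ell^\w(\delta_\ell-\delta_{\ell-1})-(R_\ell^\w-R_{\ell-1}^\w)\delta_{\ell-1}$ is only a cosmetic reorganization of the paper's add-and-subtract of $p^{\xi_i}(\us)-p^{\xi_i}(\us_{\infty,L-\ell})$ and $p^{\xi_i}_\ell(\us)-p^{\xi_i}_\ell(\us_{\infty,L-\ell})$, landing on the same two ingredients.
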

\begin{proof}
We start by recalling Proposition \ref{lemma:error_bound} and bounding the first bias term thanks to \eqref{ass:mesh_ref_control},
\begin{equation}\label{eq:proof_complexity1}
\begin{aligned}
\E\LQ \|\us-\umq{L}\|^2_U\RQ &\leq 2C_d^2 h_{L}^{2\alpha}(\|\us\|_U+\|y_d\|_H)^2  \\
&+\frac{4C_{\B}}{\nu^2}\bigg(\sum_{\ell=0}^L\E\LQ \|\left(\mathbb{E}-\QQ^{L-\ell}\right)(p_{\ell}(\us_{\ell,\infty})-p_{\ell-1}(\us_{\ell-1,\infty})\|_Z^2\RQ \\
& + L\sum_{\ell=0}^L\E\LQ \QQ^{L-\ell}\|p_{\ell}(\us_{\ell,\infty})-p_{\ell}(\us_{\ell,L-\ell})-p_{\ell-1}(\us_{\ell-1,\infty})+p_{\ell-1}(\us_{\ell-1,L-\ell})\|_Z^2\RQ\bigg)
\end{aligned}
\end{equation}
The second term is controlled using Assumption \ref{Ass:quad_form} on the quadrature formula and Assumption \ref{ass:mixed_regularity} on the mixed-regularity, yielding
\begin{align*}
\medmuskip=-1mu
\thinmuskip=-1mu
\thickmuskip=-1mu
\nulldelimiterspace=0.9pt
\scriptspace=0.9pt 
\arraycolsep0.9em 
\sum_{\ell=0}^L \E\LQ \left\|\left(\E-\QQ^{L-\ell}\right) \LQ p_{\ell}(\us_{\ell,\infty})-p_{\ell-1}(\us_{\ell-1,\infty})\RQ\right\|^2_Z\RQ &\leq \widetilde{C}^2_{\QQ,2} \sum_{\ell=0}^L\gamma^2_{L-\ell} \|p_{\ell}(\us_{\ell,\infty})-p_{\ell-1}(\us_{\ell-1,\infty})\|^2_{H_2(\Omega;Z)}\nonumber \\
&\leq D \sum_{\ell=0}^L \gamma^2_{L-\ell}h_{\ell}^{2\widetilde{\alpha}}, 
\end{align*}
for a constant $D:=2\widetilde{C}^2_{\QQ,2}C_{H,2}^2 \left(\|\us\|_U+\|y_d\|_H\right)^2 (1+\tau^{2\widetilde{\alpha}})>0$, where $\tau=\min_{\ell} \frac{h_{\ell-1}}{h_{\ell}}$ is assumed to be finite.
Concerning the third term, we argue pointwise for every node $\xi^{L-\ell}_i$ involved by $\QQ^{L-\ell}$ and add and substract suitable terms. Dropping the superindex $L-\ell$ to ease notation,
\begin{align*}
&\|p^{\xi_i}_{\ell}(\us_{\ell,\infty})-p^{\xi_i}_{\ell-1}(\us_{\ell-1,\infty}) -p^{\xi_i}_{\ell}(\us_{\ell,L-\ell})+p^{\xi_i}_{\ell-1}(\us_{\ell-1,L-\ell})\|^2_Z\leq \\
&2\|p^{\xi_i}(\us)-p^{\xi_i}(\us_{\infty,L-\ell})-p^{\xi_i}_{\ell}(\us_{\ell,\infty})+p^{\xi_i}_{\ell}(\us_{\ell,L-\ell})\|^2_Z\\
&+2\|p^{\xi_i}(\us)-p^{\xi_i}(\us_{\infty,L-\ell})-p^{\xi_i}_{\ell-1}(\us_{\ell-1,\infty})+p^{\xi_i}_{\ell-1}(\us_{\ell-1,L-\ell})\|_Z^2.
\end{align*}
The two terms are similar, thus we consider only the first one. Adding and substracting $p^{\xi_i}_{\ell}(\us)-p^{\xi_i}_{\ell}(\us_{\infty,L-\ell})$ yields,
\begin{align*}
&\|p^{\xi_i}(\us)-p^{\xi_i}(\us_{\infty,L-\ell})-p^{\xi_i}_{\ell}(\us_{\ell,\infty})+p^{\xi_i}_{\ell}(\us_{\ell,L-\ell})\|^2_Z \\
&\leq 2\|p^{\xi_i}(\us)-p^{\xi_i}(\us_{\infty,L-\ell})-p^{\xi_i}_{\ell}(\us)+p^{\xi_i}_{\ell}(\us_{\infty,L-\ell})\|^2_Z\\
&+2\|p^{\xi_i}_{\ell}(\us)-p^{\xi_i}_{\ell}(\us_{\infty,L-\ell})-p^{\xi_i}_{\ell}(\us_{\ell,\infty})+p^{\xi_i}_{\ell}(\us_{\ell,L-\ell})\|_Z^2
\end{align*}
Then, on the one hand, using \eqref{eq:difference_operators} and H\"older inequality for $r=\frac{\qbar}{2}$ together with \eqref{eq:bound_quadrature_error} yields,
\begin{align}\label{eq:proof_complexity2}
&\E\LQ \QQ^{L-\ell}\left\|p(\us)-p(\us_{\infty,L-\ell})-p_\ell(\us)+p_\ell(\us_{\infty,L-\ell})\right\|^2_Z\RQ \nonumber\\
&=\E\LQ \QQ^{L-\ell}\left\|S^{\w,\star}TS^{\w}\B (\us-\us_{\infty,L-\ell})-S_{\ell}^{\w,\star}TS_{\ell}^{\w}\B(\us-\us_{\infty,L-\ell})\right\|^2_Z\RQ \nonumber\\
&\leq h_{\ell}^{2\alpha} \E\LQ \QQ^{L-\ell}\LQ A^2\RQ\|\us-\us_{\infty,L-\ell}\|^2_U\RQ \nonumber \\
&\leq h_\ell^{2\alpha} \left(\E\LQ \left(\QQ^{L-\ell}\LQ A^2\RQ^{\frac{\qbar}{\qbar-2}}\right) \RQ\right)^{\frac{\qbar-2}{\qbar}}\left(\E\LQ \|\us-\us_{\infty,L-\ell}\|^{\qbar}_U\RQ\right)^{\frac{2}{\qbar}}\nonumber\\
&\leq \widehat{C} h_\ell^{2\alpha} \gamma^2_{L-\ell},
\end{align}
where $\widehat{C}=\left(\E\LQ \left(\QQ^{L-\ell}\LQ A^2\RQ^{\frac{\qbar}{\qbar-2}}\right) \RQ\right)^{\frac{\qbar-2}{\qbar}}C_{\QQ,\qbar}^2\|p(\us)\|^2_{H_{\qbar}(\Omega;Z)}.$
On the other hand, using the affinity and continuity of the map $u\mapsto p_\ell(u)$, H\"older inequality for a $\delta$ such that $2(1+\delta)<\qbar$, and Lemma \ref{lemma:surplus},
\begin{align}
&\E \LQ \QQ^{L-\ell}\left\|p_\ell(\us)-p_\ell(\us_{\infty,L-\ell})-p_{\ell}(\us_{\ell,\infty})+p_{\ell}(\us_{\ell,L-\ell})\right\|^2_Z\RQ=\nonumber\\
&\E\LQ \QQ^{L-\ell}\left\|S^{\w,\star}_{\ell}\C^\star\Lambda_H\C S^{\w}_{\ell}(\us-\us_{\infty,L-\ell}-\us_{\ell,\infty}+\us_{\ell,L-\ell})\right\|^2_Z\RQ\nonumber\\
&\leq \E\LQ \QQ^{L-\ell}\LQ C_S^4C^2_T\RQ \|\us-\us_{\infty,L-\ell}-\us_{\ell,\infty}+\us_{\ell,L-\ell}\|^2_U\RQ\nonumber \\
&\leq \left(\E\LQ \QQ^{L-\ell}\LQ C_S^4C^2_T\RQ^{\frac{1+\delta}{\delta}}\RQ\right)^{\frac{\delta}{1+\delta}}\left(\E\LQ\|\us-\us_{\infty,L-\ell}-\us_{\ell,\infty}+\us_{\ell,L-\ell}\|^{2(1+\delta)}_U\RQ\right)^{\frac{1}{1+\delta}}\nonumber\\
&\leq \widehat{C}_2 h_{\ell}^{2\beta} \gamma^2_{L-\ell},\label{eq:proof_complexity3}
\end{align}
where $\widehat{C}_2:=\left(\E\LQ \QQ^{L-\ell}\LQ C_S^4C^2_T\RQ^{\frac{1+\delta}{\delta}}\RQ\right)^{\frac{\delta}{1+\delta}}C^{\frac{1}{1+\delta}}$, $C$ being the constant appearing in Lemma \ref{lemma:surplus}.
Recalling together \eqref{eq:proof_complexity1},\eqref{eq:proof_complexity2} and \eqref{eq:proof_complexity3}, we get the claim,
\begin{equation*}
\E\LQ \|\us-\umq{L}\|^2_U\RQ \leq 2C_1^2 h_{L}^{2\alpha} +C_3^2L\sum_{\ell=0}^L h_\ell^{2\beta}\gamma^2_{L-\ell},
\end{equation*}
for suitable costants $C_1$ and $C_3$.
\end{proof}

\begin{remark}[On Assumption \ref{Ass:quad_form}]\label{remark:power_p}
Assumption \ref{Ass:quad_form} on the control of the $L^q(\Omega;Z)$ norm of the quadrature error for any $q\in [2,\infty)$ is needed in Lemma \ref{lemma:surplus} and Theorem \ref{thm:error}. We remark that either using two random statistically independent quadrature formulae for the semidiscretization of the OCP and for quadrature of the adjoint variables, or assuming that $C_S, C_a$, and $C_a^\star$ are uniformly bounded, would permit to perform the analysis assuming only a bound on the $L^2(\Omega;Z)$-norm of the quadrature error.
\end{remark}
\begin{remark}[Theorem \ref{thm:error} for deterministic quadrature formulae]
The proof of Theorem \ref{thm:error} simplifies if the quadrature formulae are deterministic, since there are not outer expectations of products of sample-dependent quantities that have to be handled via H\"older inequality (as in \eqref{eq:proof_complexity2} and \eqref{eq:proof_complexity3}). Starting from the error bound \eqref{eq:err_bound_deterministic}, using \eqref{ass:mesh_ref_control} to bound the bias term, \eqref{ass_quad_formula_det} and \eqref{ass:mixed} to bound the second term, and \eqref{eq:difference_operators}, \eqref{eq:bound_quadrature_given_set} and \eqref{eq:lemma_deterministic} to control the third term leads to the final result
\begin{equation}\label{eq:complexity_result_deterministic}
\|\us-\umq{L}\|_U \leq  C_{D,1} h_{L}^{\alpha} +C_{D,3}\sum_{\ell=0}^L h_{\ell}^{\beta}\gamma_{L-\ell},
\end{equation}
for suitable positive constants $C_{D,1}$ and $C_{D,3}$.
\end{remark}
\begin{remark}[On control constraints]\label{remark:control_constraint}
The assumption $\Uad= U$ is used exclusively in the proof of Lemma \ref{lemma:surplus} to choose suitably the test functions $w$ in all relations \eqref{eq:var_ini}-\eqref{eq:var_ini_kell} and, in turn, Lemma \ref{lemma:surplus} permits to bound the expression in \eqref{eq:proof_complexity3}. Numerical experiments presented in Section \ref{sec:numerical_section} show that Lemma \ref{lemma:surplus} does not necessarily hold in the constrained case, but still the multilevel framework can be effectively applied to problems with box-constraints. From this perspective, our analysis could be sharpened by first refining the error bound of Proposition \ref{lemma:error_bound}, see the discussion in Section \ref{sec:numerical_section}.
\end{remark}

\section{Complexity analysis}\label{sec:Complexity}

In this section, we present a complexity analysis to compare the asymptotic cost of the multilevel approximation $\umq{L}$ and that of a standard, single level, approximation $\us_{\ell,k}$ to achieve a desired error tolerance $\varepsilon$.

Let $\left\{\QQ^k\right\}_{k\geq 0}$ be a family of randomized quadrature formulae satisfying Assumption \ref{Ass:quad_form} with $\gamma_k=N_k^{-\eta}$, for a $\eta\in (0,\infty)$ and $N_k$ denoting the number of quadrature points for every $k\geq 0$. Furthermore, let $h_\ell=2^{-\ell}$, $\mathrm{dim}V_{\ell}=2^{\ell d}$, and assume that the cost of computing $\us_{\ell,k}$ is proportional to $\mathrm{dim}(V_{\ell})N_k$. This is verified if, e.g., the outer nonlinear optimization algorithm converges in a number of iterations that is independent on the discretization parameters, and the cost of each iteration is proportional to $\mathrm{dim}(V_{\ell})N_k$ (see, e.g., \cite{vanzan,ciaramella2024multigrid} for optimal linear solvers.)
Then, recalling \eqref{eq:error_boundSL}, 
\begin{equation}\label{eq:error_SL_complexity}
\E \LQ \|\us-\us_{\ell,k}\|_U^2\RQ\leq 2C^2_1 h_{\ell}^{2\alpha}+2C^2_2 N_k^{-2\eta},
\end{equation}
and imposing that both contributions are smaller than $\frac{\varepsilon^2}{2}$ leads to the choices
\begin{equation}\label{eq:L_N_k}
L=\left\lceil\frac{1}{\alpha}\log_2\left(2C_1\varepsilon^{-1}\right)\right\rceil\quad\text{and}\quad N_K=\left\lceil (2C_2\varepsilon^{-1})^\frac{1}{\eta}\right\rceil, 
\end{equation}
being $\lceil\cdot\rceil$ the ceiling function, that is, $\lceil x\rceil$ is the unique integer satisfying $x\leq \lceil x\rceil <x+1$, $\forall x\in \setR$. Hence, to compute an approximation with an error tolerance $\varepsilon$, a single level approach requires a computational work $W_{\SL}$ of order
\begin{equation}\label{eq:asymptotic_cost_MC}
W_{\SL}(\varepsilon)\propto\mathrm{dim}V_{L}N_K\leq 2^{Ld} \left[(2C_2\varepsilon^{-1})^\frac{1}{\eta}+1\right] \sim \varepsilon^{-\frac{d}{\alpha}}+\varepsilon^{-\frac{d}{\alpha}-\frac{1}{\eta}}.
\end{equation}

Similarly, for $\umq{L}$ we wish to choose $L$ and the number of quadrature points on each level in order to meet the tolerance criterium. This analysis leads to the following result, which is a generalization of \cite[Theorem 3.1]{giles2008multilevel}. 
\begin{theorem}[Complexity result of the Multilevel Quadrature approximation]\label{thm:complexity}
Let $\left\{\QQ^k\right\}_{k\geq 0}$ be a family of randomized quadrature formulae satisfying Assumption \ref{Ass:quad_form} with $\gamma_k=N_k^{-\eta}$, $\eta\in (0,\infty)$.
Let $h_{\ell}=2^{-\ell}$, $\mathrm{dim}(V_\ell)=2^{d\ell}$, and assume that the cost of minimizing $J_{\ell,k}$ is proportional to $\mathrm{dim}(V_\ell)N_k$. Then, choosing
\begin{equation}\label{eq:L_ML}
L=\left\lceil\frac{1}{\alpha}\log_2\left(2C_1\varepsilon^{-1}\right)\right\rceil
\end{equation}
and
\begin{equation}\label{eq:complexity_ML2}
N_{\ell}=\left\lceil \left(\sqrt{2LC_3^2}\varepsilon^{-1}\right)^\frac{1}{\eta} h_{\ell}^{\frac{2\beta}{2\eta+1}}\mathrm{dim}V_\ell^{-\frac{1}{2\eta+1}}\left(\sum_{j=0}^L h_j^{\frac{2\beta}{2\eta+1}}\mathrm{dim}V_j^{\frac{2\eta}{2\eta+1}}\right)^{\frac{1}{2\eta}}\right\rceil
\end{equation}
for every $\ell\in\left\{0,\dots,L\right\}$, the multilevel approximation $\umq{L}$ achieves an error $\varepsilon\in (0,\frac{1}{2}]$ at the asymptotic computational cost,
\begin{equation*}
W_{ML}(\varepsilon)\sim 
\begin{cases}
\varepsilon^{-\frac{d}{\alpha}}+\varepsilon^{-\frac{1}{\eta}}|\log_2 \varepsilon^{-1}|^\frac{1}{2\eta}   \quad &\beta>\eta d,\\
\varepsilon^{-\frac{d}{\alpha}}+\varepsilon^{-\frac{1}{\eta}}|\log_2 \varepsilon^{-1}|^\frac{1}{\eta}\quad &\beta =\eta d,\\
\varepsilon^{-\frac{d}{\alpha}}+\varepsilon^{-\frac{1}{\eta}-\frac{2\beta-2\eta d}{2\eta\alpha} }|\log_2 \varepsilon^{-1}|^\frac{1}{2\eta} \quad &\beta <\eta d.\\
\end{cases}
\end{equation*}
\end{theorem}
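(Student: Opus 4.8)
The plan is to combine the a-priori error bound \eqref{eq:complexity_result} of Theorem \ref{thm:error} with a constrained optimisation of the per-level sample sizes, exactly in the spirit of \cite[Theorem 3.1]{giles2008multilevel} but generalised to an arbitrary quadrature convergence rate $\eta$. Reading \eqref{eq:complexity_result} as the sum of a bias term $2C_1^2 h_L^{2\alpha}$ and a statistical term $C_3^2 L\sum_{\ell=0}^L h_\ell^{2\beta}\gamma_{L-\ell}^2$, I would first split the target tolerance by asking each contribution to be at most $\varepsilon^2/2$. The bias is handled at once: with $L$ chosen as in \eqref{eq:L_ML} one has $2^{L\alpha}\geq 2C_1\varepsilon^{-1}$, hence $C_1 h_L^\alpha\leq\varepsilon/2$ and $2C_1^2 h_L^{2\alpha}\leq\varepsilon^2/2$. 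This same choice fixes $L\sim\tfrac1\alpha|\log_2\varepsilon^{-1}|$, which will later generate the logarithmic factors.

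For the statistical term I identify $\gamma_{L-\ell}=N_\ell^{-\eta}$, where $N_\ell$ is the number of quadrature points used for the spatial-level-$\ell$ correction, so that the constraint becomes $C_3^2 L\sum_{\ell=0}^L h_\ell^{2\beta}N_\ell^{-2\eta}\leq\varepsilon^2/2$, while the total work, by the cost assumption summed over levels, is $W_{ML}\propto\sum_{\ell=0}^L \mathrm{dim}V_\ell\,N_\ell$. Treating the $N_\ell$ as continuous and minimising $W_{ML}$ subject to the constraint held with equality, the Lagrange stationarity condition $\mathrm{dim}V_\ell\propto L\,h_\ell^{2\beta}N_\ell^{-2\eta-1}$ yields $N_\ell\propto\big(h_\ell^{2\beta}\mathrm{dim}V_\ell^{-1}\big)^{1/(2\eta+1)}$, and fixing the multiplier from the constraint reproduces precisely the allocation \eqref{eq:complexity_ML2}. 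Since $N\mapsto N^{-2\eta}$ is decreasing, replacing these optimal values by their ceilings can only decrease the statistical term, so the constraint — and therefore, together with the bias bound, the overall tolerance $\E\LQ\|\us-\umq{L}\|_U^2\RQ\leq\varepsilon^2$ — is preserved.

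It then remains to evaluate the cost. Using $\lceil N_\ell\rceil\leq N_\ell+1$, I would split $W_{ML}\lesssim\sum_\ell\mathrm{dim}V_\ell\,N_\ell+\sum_\ell\mathrm{dim}V_\ell$; the second sum is geometric in $\mathrm{dim}V_\ell=2^{d\ell}$ and gives $\sum_{\ell=0}^L 2^{d\ell}\sim 2^{dL}\sim\varepsilon^{-d/\alpha}$, which is the origin of the first term appearing in all three regimes. Substituting the optimal $N_\ell$ into the first sum and using $h_\ell=2^{-\ell}$, $\mathrm{dim}V_\ell=2^{d\ell}$, the $\ell$-dependence collapses to the single geometric series $\sum_{\ell=0}^L 2^{\ell\frac{2(\eta d-\beta)}{2\eta+1}}$, carrying the $\varepsilon$- and $L$-dependent prefactor coming from the Lagrange multiplier. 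The three cases of the theorem then correspond exactly to the sign of $\eta d-\beta$: for $\beta>\eta d$ the series converges to a constant, for $\beta=\eta d$ it equals $L+1$, and for $\beta<\eta d$ it is dominated by its last term $2^{L\frac{2(\eta d-\beta)}{2\eta+1}}$, which via $2^L\sim\varepsilon^{-1/\alpha}$ becomes the extra algebraic power of $\varepsilon$.

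The delicate step — and the one I expect to demand the most care — is the final bookkeeping of prefactors: one must track how the explicit $L$ in \eqref{eq:complexity_result}, the factor $(\sqrt{2LC_3^2}\varepsilon^{-1})^{1/\eta}$ in \eqref{eq:complexity_ML2}, and the value of the geometric series combine, and then substitute $L\sim|\log_2\varepsilon^{-1}|$ to obtain the stated powers of $|\log_2\varepsilon^{-1}|$ in each regime. There is no conceptual obstacle beyond this accounting; the argument is a direct generalisation of the classical multilevel Monte Carlo complexity theorem, with the quadrature rate $\gamma_k=N_k^{-\eta}$ replacing the Monte Carlo rate $N_k^{-1/2}$ throughout.
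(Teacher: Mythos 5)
Your proposal is correct and follows essentially the same route as the paper's proof: split the tolerance using the bound of Theorem \ref{thm:error}, fix $L$ from the bias term, derive the sample allocation \eqref{eq:complexity_ML2} by a Lagrangian minimisation of $\sum_{\ell=0}^L \mathrm{dim}(V_\ell)N_\ell$ under the variance constraint, and reduce the cost to the geometric series $\sum_{\ell=0}^L 2^{-\ell(2\beta-2\eta d)/(2\eta+1)}$ whose three regimes (convergent, $L+1$, dominated by the last term with $2^L\sim\varepsilon^{-1/\alpha}$) give the stated rates. The paper is equally terse on the final bookkeeping, deferring to \cite[Theorem 3.1]{giles2008multilevel}, so nothing is missing from your argument relative to the paper's own proof.
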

\begin{proof}
Due to Theorem \ref{thm:error}, $\umq{L}$ satisfies, upon relabelling the sequence of quadrature formulae, the error bound
\begin{equation}\label{eq:error_bound_MLMC}
\E\LQ \|\us -\umq{L}\|_U^2\RQ \leq 2C_1^2 h_{L}^{2\alpha} + L C^2_3 \sum_{\ell=0}^L h_{\ell}^{2\beta}\gamma_{\ell}^{2},
\end{equation}
for the same constant $C_1$ appearing in \eqref{eq:error_SL_complexity}, and for $C_3>0$. The maximum level $L$ is chosen again so that the bias contribution is smaller than $\frac{\varepsilon^2}{2}$ leading to \eqref{eq:L_ML}. Note that if $\varepsilon\in (0,\frac{1}{2}]$ then
\begin{equation}\label{eq:bound_L}
L\leq \frac{1}{\alpha}\log_2\left(2C_1\varepsilon^{-1}\right)+1\leq \overline{C}_1|\log_2 \varepsilon^{-1}|,
\end{equation}
with $\overline{C}_1=\frac{1}{\alpha}\log_2(2 C_1)+\frac{1}{\alpha}+1$. In addition,
\begin{equation}\label{eq:bound_sumdimV_l}
\sum_{\ell=0}^L \mathrm{dim}V_\ell=\sum_{\ell=0}^L 2^{\ell d}=2^{L d}\sum_{\ell=0}^L 2^{-\ell d}\leq \overline{C}_2\varepsilon^{-\frac{d}{\alpha}},
\end{equation}
with $\overline{C}_2=\frac{2^{2d}}{2^d-1}(2C_1)^{\frac{d}{\alpha}}$.
The optimal number of quadrature points on each level is derived by minimizing the computational cost, constrained by the second error contribution to be smaller than $\frac{\epsilon^2}{2}$, that is,
\begin{equation}\label{eq:complexity_ML}
\begin{aligned}
\min_{\left\{N_{\ell}\right\}_{\ell=0}^L}& \sum_{\ell=0}^L \mathrm{dim}(V_{\ell}) N_\ell\\
\text{s.t.}&\quad L C_3^2 \sum_{\ell=0}^L h_{\ell}^{2\beta}N_{\ell}^{-2\eta}\leq \frac{\varepsilon^2}{2}. 
\end{aligned}
\end{equation}
By solving the constrained optimization problem using a Lagrangian approach, we obtain
\eqref{eq:complexity_ML2}. Next, we insert \eqref{eq:L_ML} and \eqref{eq:complexity_ML2} into the formula of the computational cost and, together with \eqref{eq:bound_sumdimV_l}, we obtain
\begin{equation}\label{eq:complexity_ML3}
W_{ML}(\varepsilon)\propto \sum_{\ell=0}^L \mathrm{dim}(V_\ell)N_{\ell}\leq \sum_{\ell=0}^L \mathrm{dim}(V_\ell)(N_{\ell}+1)
 \leq \overline{C}_2 \varepsilon^\frac{-d}{\alpha}+ (\sqrt{2LC_3^2}\varepsilon^{-1})^\frac{1}{\eta} \left(\sum_{\ell=0}^L 2^{-\frac{\ell (2\beta-2\eta d)}{2\eta+1}}\right)^{\frac{2\eta+1}{2\eta}}.
\end{equation}
Finally, the claim follows using standard arguments, see, e.g., \cite[Theorem 3.1]{giles2008multilevel}, to bound the summation based on the relative position of $\beta$ and $\eta d$, and using \eqref{eq:bound_L} to bound $L$ asymptotically with respect to $\varepsilon$.
\end{proof}

\begin{example}[A multilevel Monte Carlo Approximation]
If $\left\{\QQ^k\right\}_{k\geq 0}$ denotes a sequence of Monte Carlo approximations of increasing number of quadrature points, \eqref{eq:multilevel_approx} reduces to 
\begin{equation}\label{eq:MLMC}
\medmuskip=-1mu
\thinmuskip=-1mu
\thickmuskip=-1mu
\nulldelimiterspace=0.9pt
\scriptspace=0.9pt 
\arraycolsep0.9em 
\begin{aligned}
\umlmc{L}&:=\Pro\left(\frac{1}{\nu}\Lambda_U^{-1}\B^\star\sum_{\ell=0}^L \frac{1}{N_\ell}\sum_{i=1}^{N_\ell}\LQ p^{\w_{i,\ell}}_{\ell}(\usMLMC{\ell}_{\ell,\ell})-p^{\w_{i,\ell}}_{\ell-1}(\usMLMC{\ell-1}_{\ell-1,\ell})\RQ \right)\\
&=\Pro\left(\frac{1}{\nu}\Lambda_U^{-1}\B^\star \left\{ \frac{1}{N_0}\sum_{i=1}^{N_0} p^{\w_{i,0}}_{0}(\usMLMC{0}_{0,0}) + \sum_{\ell=1}^L \frac{1}{N_\ell} \sum_{i=1}^{N_\ell} \left(p^{\w_{i,\ell}}_{\ell}(\usMLMC{\ell}_{\ell,\ell})-p^{\w_{i,\ell}}_{\ell-1}(\usMLMC{\ell}_{\ell-1,\ell})\right)\right\}\right).
\end{aligned}
\end{equation}
where $\overrightarrow{\w}_{\ell}=\left\{\w^{i,\ell}\right\}_{i=1}^{N_\ell}$ are the independent, randomly drawn, samples.
Concerning its complexity, the Monte Carlo method satisfies the hypothesis of Theorem \ref{thm:complexity} with $\eta=\frac{1}{2}$. The optimal number of samples on each level then simplifies to
\begin{equation}\label{eq:N_ell_MLMC}
N_{\ell}=\left\lceil 2LC_3^2\varepsilon^{-2} h_{\ell}^{\beta}\sqrt{\mathrm{dim}V_\ell}\left(\sum_{j=0}^L h_j^{\beta}\sqrt{\mathrm{dim}V_j}\right)\right\rceil,
\end{equation}
which permits to achieve, together with $L$ as in \eqref{eq:L_ML}, a tolerance $\varepsilon\in (0,\frac{1}{2}]$ with a complexity
\begin{equation}\label{eq:complexity_MLMC}
W_{MLMC}(\varepsilon)\sim 
\begin{cases}
\varepsilon^{-\frac{d}{\alpha}}+\varepsilon^{-2}|\log_2 \varepsilon^{-1}|   \quad &2\beta< d,\\
\varepsilon^{-\frac{d}{\alpha}}+\varepsilon^{-2}|\log_2 \varepsilon^{-1}|^2\quad &2\beta = d,\\
\varepsilon^{-\frac{d}{\alpha}}+\varepsilon^{-2+\frac{2\beta- d}{\alpha} }|\log_2 \varepsilon^{-1}| \quad &2\beta > d.\\
\end{cases}
\end{equation}
Compared to \cite[Theorem 3.1]{giles2008multilevel}, \eqref{eq:complexity_MLMC} differs by an extra term $|\log (\varepsilon^{-1})|$ which is due to the factor $L$ multiplying the summation in \eqref{eq:error_bound_MLMC}.
\end{example}
\begin{remark}[On the dependence on the parametric dimension]
Assume that the probability space is parametrized by a vector of $M$ random variables.
Since the Monte Carlo method satisfies the assumptions of Theorem \ref{thm:complexity} with $\eta=\frac{1}{2}$, independent on $M$, the rates in \eqref{eq:complexity_MLMC} hold even for $M=\infty$ and thus \eqref{eq:MLMC} is an effective multilevel approximation even for OCP constrained by high-dimensional parametric PDEs. For other quadrature formulae, such as Quasi-Monte Carlo and stochastic collocation, the value of $\eta$ does in general depend on $M$, and so the complexity of the multilevel approximation depends, like that of a single-level approach, on the dimension $M$.
\end{remark}

\section{Numerical experiments}\label{sec:numerical_section}
In this section we verify the assertions of Lemma \ref{lemma:surplus} and Theorem \ref{thm:complexity}, explore numerically the performance of the multilevel approximation in the presence of box constraints, and compare the complexity of the two solution strategies described in Section \ref{sec:Complexity}. 

We consider the OCP \eqref{eq:OCP_model_problem} with distributed control and observations set on the domain $\mathcal{D}=(0,1)^d$, $d\in \left\{1,2\right\}$, and set $\nu=10^{-2}$. We numerically investigate both the unconstrained case ($\Uad=U$), and the constrained case with $\Uad=\left\{v \in L^2(\D):\; a\leq v(\mathbf{x})\leq b\; \forall \mathbf{x}\in \D\right\}$. 
As bilinear form we use
\[a_\w(y,v)=\int_{\D} \kappa(\mathbf{x},\w)\nabla y\cdot\nabla v\; d\mathbf{x},\]
$\kappa(\mathbf{x},\w)$ being the diffusion coefficient and equal to 
\[\kappa(\mathbf{x},\w)=\exp\left(\sigma^2\cdot (\xi_1(\w)\psi_1(\mathbf{x})+\xi_2(\w)\psi_2(\mathbf{x})+\xi_3(\w)\psi_3(\mathbf{x})
\right)).\]
The variables $\xi_n\sim \mathcal{U}(-1,1)$ are uniformly distributed and $\sigma^2=\exp(-1.125)$. We consider $M=3$ random variables so that an accurate reference solution can be computed using a stochastic collocation discretization of the expectation operator on anisotropic full-tensor grid and on a very fine computational mesh. 
For $d=1$, the functions $\psi_n$ are
\[\psi_1(x)=\cos(\pi x),\quad \psi_2(x)=\sin(\pi x),\quad \psi_3(x)=\cos(2 \pi x),\]
the target state is $y_d(x)=\exp(2x)\sin(2\pi x)$, and $a=-1$ and $b=3$. For $d=2$, the functions $\psi_n$ are
\[\psi_1(x,y)=\cos(\pi x)\sin(\pi y),\hspace{0.2cm} \psi_2(x)=\cos(2\pi x)\sin(\pi y),\hspace{0.2cm} \psi_3(x)=\cos(\pi x)\sin(2 \pi y),\]
the target state is $y_d(x,y)=\exp(2x+2y)\sin(2\pi x)\sin(2\pi y)$ and $a=-5$, $b=20$.
The reference solutions for $d=2$ in both unconstrained and constrained cases are shown in Fig \ref{fig:ref}.
\begin{figure}
\centering
\includegraphics[scale=0.35]{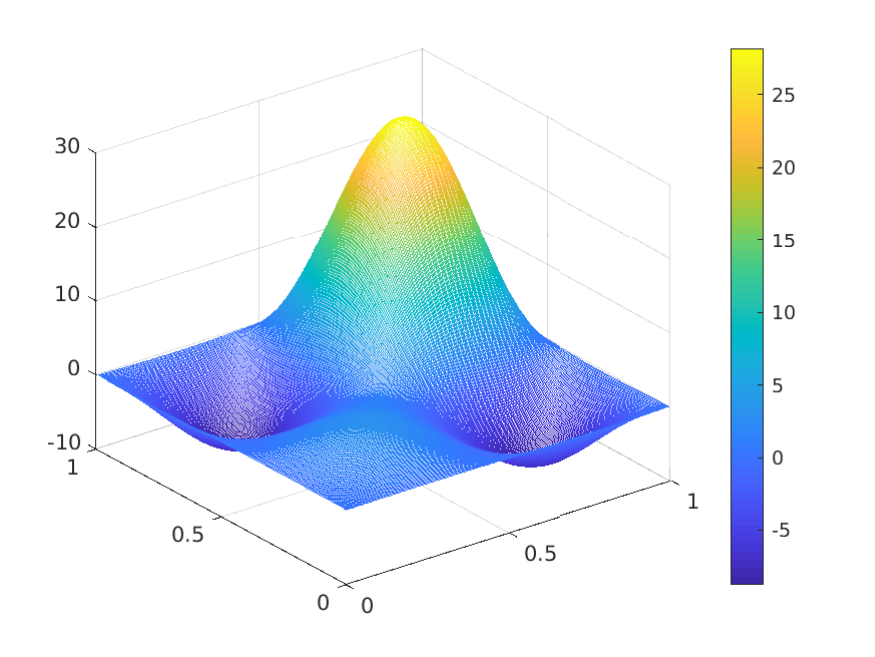}
\includegraphics[scale=0.35]{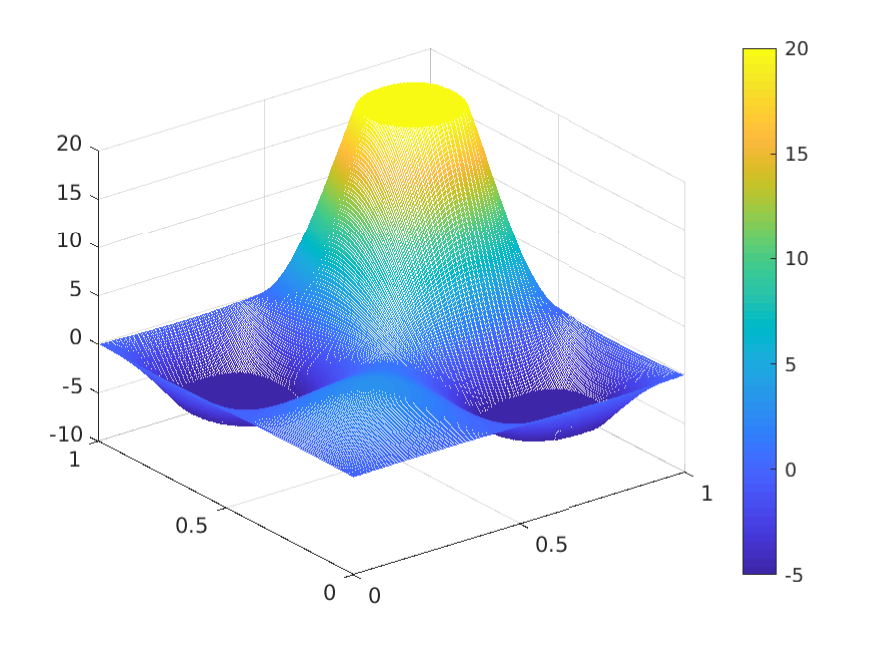}
\caption{Reference solutions  $u^\star$ for the two dimensional problem without (left) and with (right) constraints.}\label{fig:ref}
\end{figure}
\subsection{Preliminary analysis}
As preliminary step, we fit numerically the constants $C_1$, $\alpha$, $C_2$, $\beta$ and $C_3$ appearing in \eqref{eq:error_SL_complexity} and \eqref{eq:error_bound_MLMC} with the following numerical procedures. The result of this preliminary analysis are reported in Fig. \ref{fig:preliminary_analysis} for $d=2$, for the unconstrained case (first row) and unconstrained case (bottom row).
\begin{itemize}
\item To fit $C_1$ and $\alpha$, we use the same stochastic collocation discretization used for the reference solution, solve the optimization problem, and compute the $L^2(\D)$ error on the control for a sequence of spatial discretizations. We use Lagrangian continuous piecewise linear finite elements. In the unconstrained case we found $\alpha\approx 2.09$ for both the one- and two-dimensional problems, while in the constrained cased we obtain $\alpha=1.91$ in 1D, and $\alpha=1.74$ in 2D.
The value of $C_1$ depends on the setting, but in all cases a linear decay of the error with respect to the mesh size is observed in a log-log plot.
\item To fit $C_2$, we fix a fine mesh, and solve the optimization problem with a sequence of Monte Carlo quadratures with $N_{k}=2^k$ points, with $k\in \left\{2,\dots,10\right\}$. The resulting $L^2(\D)$ error on the control is then averaged over twenty repetitions. We verify that $\gamma_k=N_k^{-\frac{1}{2}}$ and again the constant $C_2$ is setting dependent. Interestingly, $C_2$ is smaller in the constrained case (intuitively, the presence of constraint reduces the variability of the adjoint variable), which in turn eases the solution of the nonlinear optimization problem by allowing a smaller number of Monte Carlo samples for a given tolerance compared to the unconstrained case.
\item To fit $\beta$ and $C_3$, we consider a sequence of meshes with $h_{\ell}\propto 2^{-\ell}$ and of quadrature formulae with $N_{\ell}\propto 4^{\ell}$. Then for each $\ell$, we minimize $J_{\ell,\ell}$, $J_{\ell-1,\ell}$ and compute numerically the quantity 
\begin{equation}\label{eq:numerical_result}
D_{\ell}:=\left\|\E\LQ p_{\ell}(\us_{\ell,\infty})-p_{\ell-1}(\us_{\ell-1,\infty})\RQ -\QQ^{\ell}\LQ p_{\ell}(\usMLMC{\ell}_{\ell,\ell})-p_{\ell-1}(\usMLMC{\ell}_{\ell-1,\ell})\RQ\right\|_{L^2(\D)},
\end{equation}
which appears in \eqref{eq:first_eq_thm} in the proof of Lemma \ref{lemma:error_bound} before we did further splitting and majorisations.
The result is averaged over ten simulations.
We obtain a rate of decay close to $3$, in both the unconstrained and constrained case, which is consistent with a choice of $\beta=2$. Numerically, we further verified the mixed decay of the controls (quantity $\Delta u$ in Lemma \ref{lemma:surplus}) in the unconstrained case, see Fig. \ref{fig:time_solution} (left). With box constraints, the claim of Lemma \ref{lemma:surplus} seems numerically not to hold. Nevertheless, this does not prevent $D_{\ell}$ to exhibit the correct mixed decay which justifies the application of the MLMC quadrature formula even with control constraints.
\end{itemize}

\begin{figure}
\includegraphics[scale=0.33]{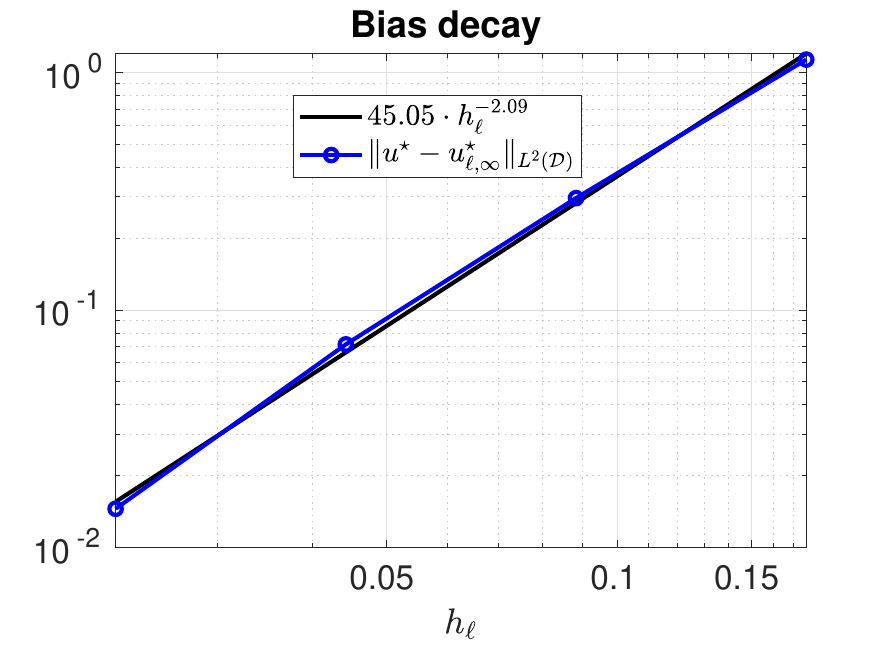}\includegraphics[scale=0.33]{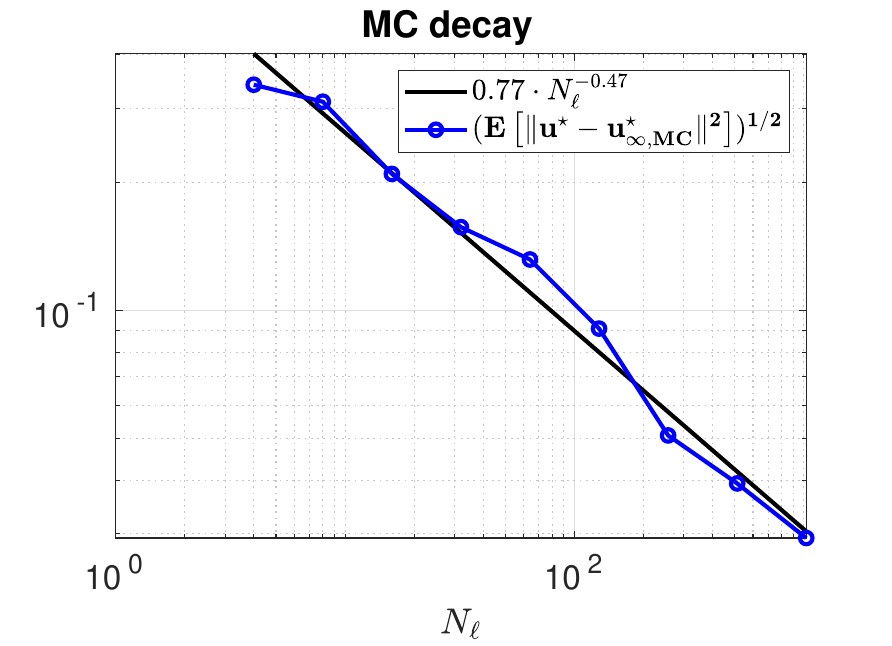}\includegraphics[scale=0.33]{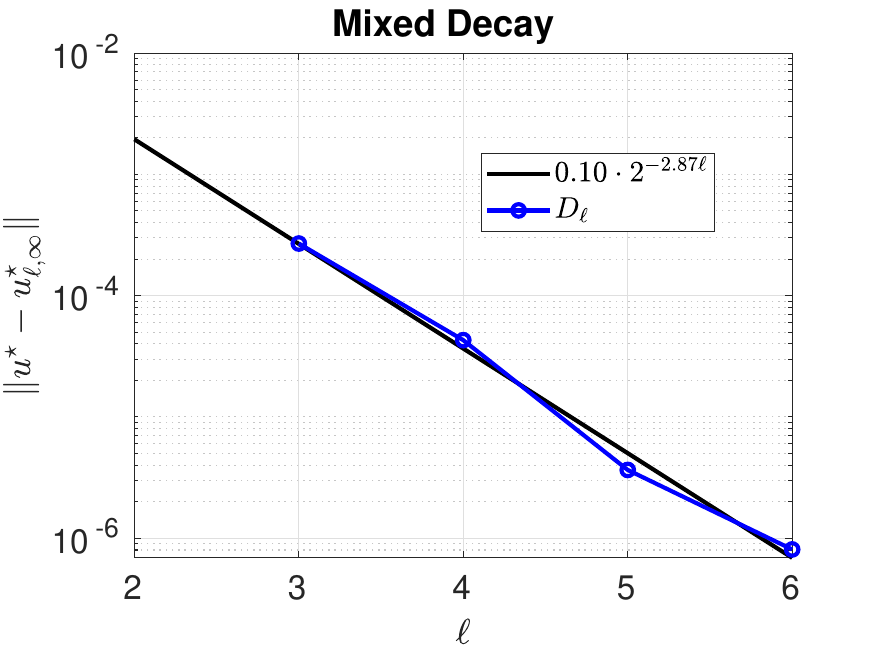}\\
\includegraphics[scale=0.33]{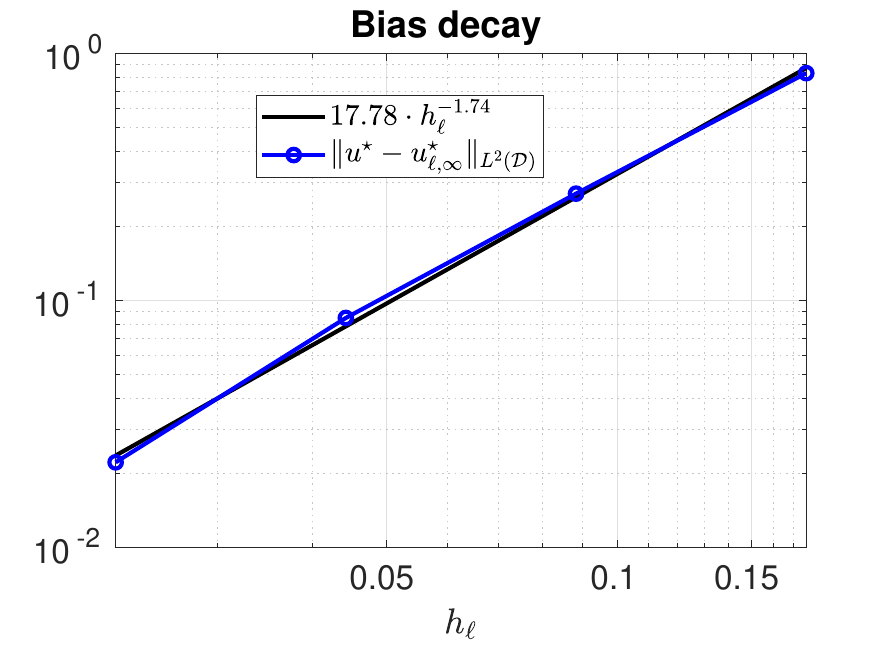}\includegraphics[scale=0.33]{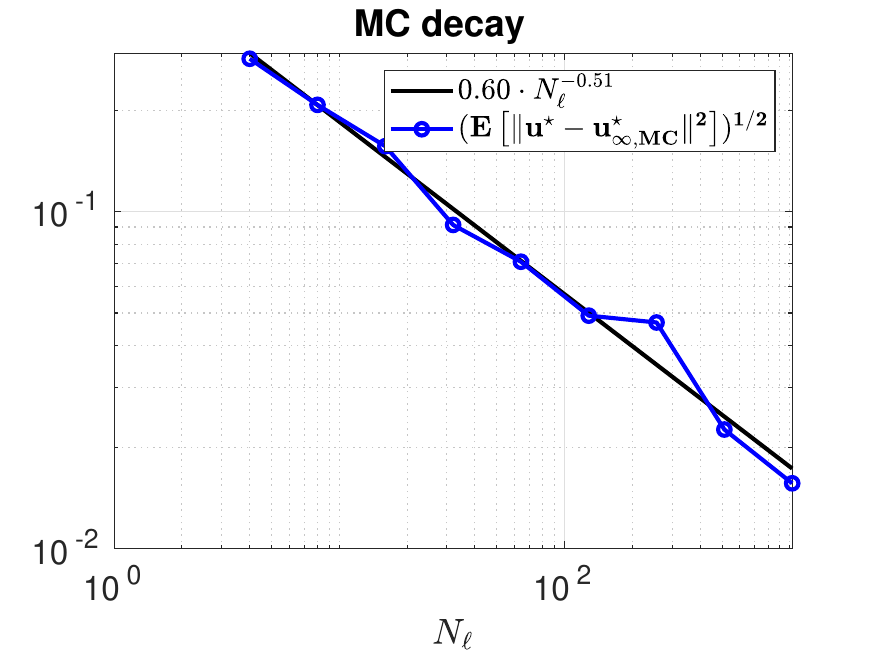}\includegraphics[scale=0.33]{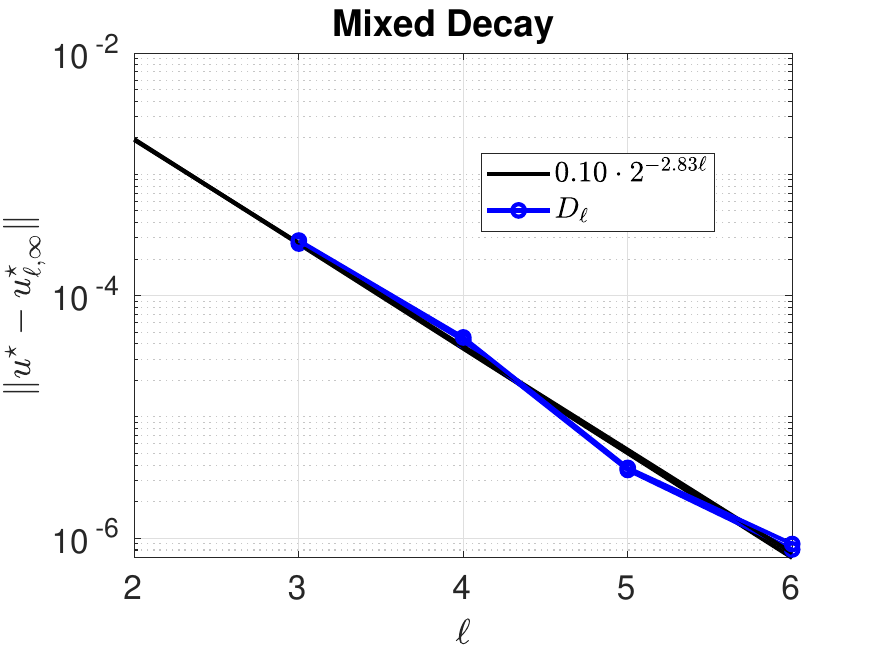}\\
\caption{Results of the preliminary analysis to fit the constants $C_1$, $\alpha$, $C_2$, $\beta$ and $C_3$ for the unconstrained case (top row) and constrained case (bottom row) for $d=2$.}\label{fig:preliminary_analysis}
\end{figure}

As final preliminary step, we verify that the cost of minimizing $J_{\ell,k}$ is linear with respect to $\text{dim}(V_{\ell})N_k$. In the unconstrained case, $J_{\ell,k}$ is minimized by solving directly the full-space optimality system \eqref{eq:full_space_system_discretized} using the block diagonal preconditioner proposed in \cite{Kouri2018} and analyzed in \cite{vanzan}. In particular such preconditioner allows to precondition in parallel the $2N_k$ PDEs, and leads to a constant number of Krylov iterations with respect to the mesh size and number of samples. Fig \ref{fig:time_solution} shows the computational time to assemble all the finite element matrices and to solve the optimality system as the size of the problem increases. 
In the constrained case, we solve the minimization problem with an outer nonlinear semismooth Newton iteration, and as inner solver we use again a Krylov method preconditioned by the same block diagonal preconditioner as for the unconstrained problem. In \cite{ciaramella2024multigrid}, the authors showed that this strategy leads to a robust number of outer nonlinear iterations with respect to the parameters of interest, so that the overall computational cost can be considered linear in the size of the problem as in the unconstrained case, but with a larger constant. 
\begin{figure}
\centering
\includegraphics[scale=0.33]{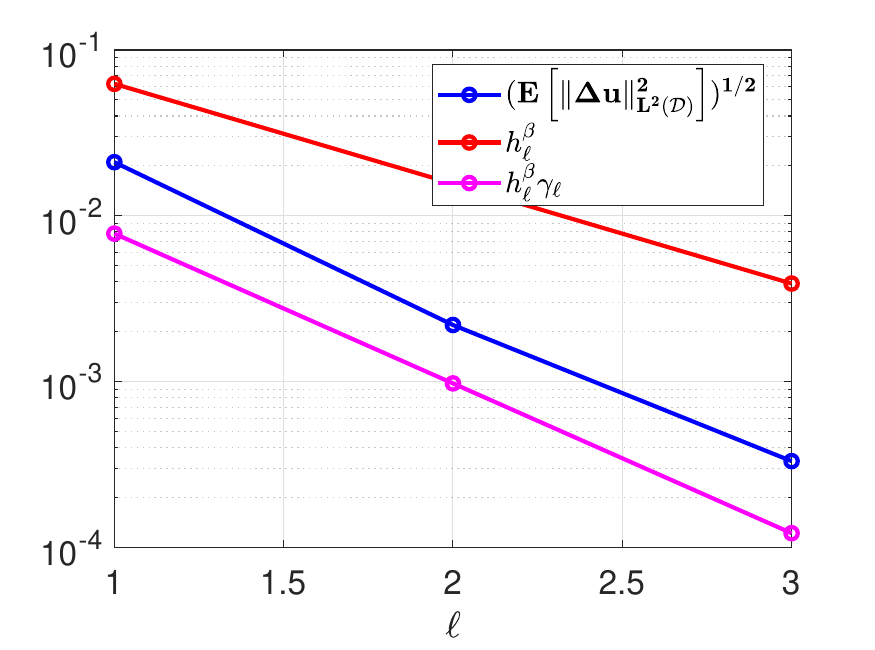}
\includegraphics[scale=0.33]{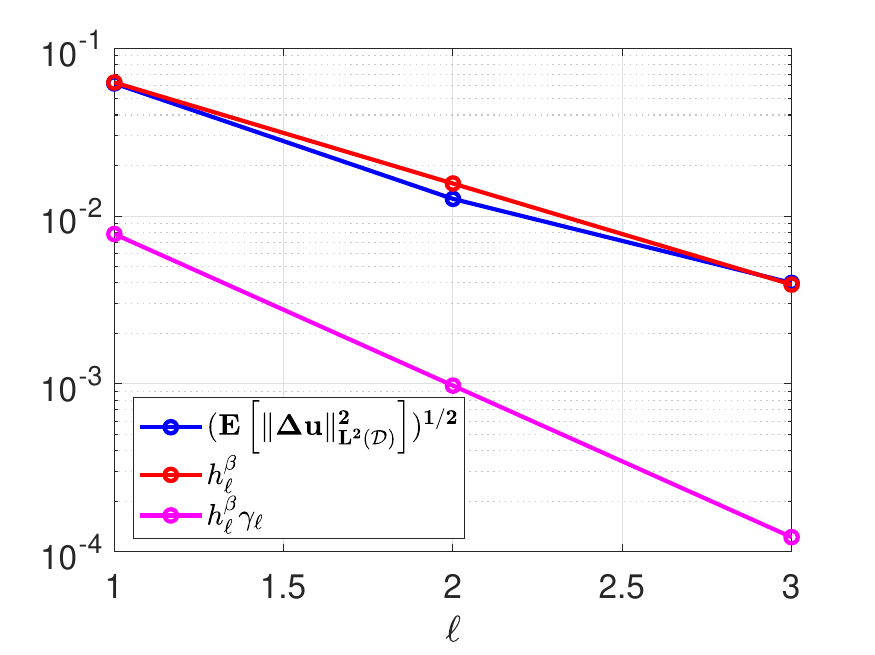}
\includegraphics[scale=0.33]{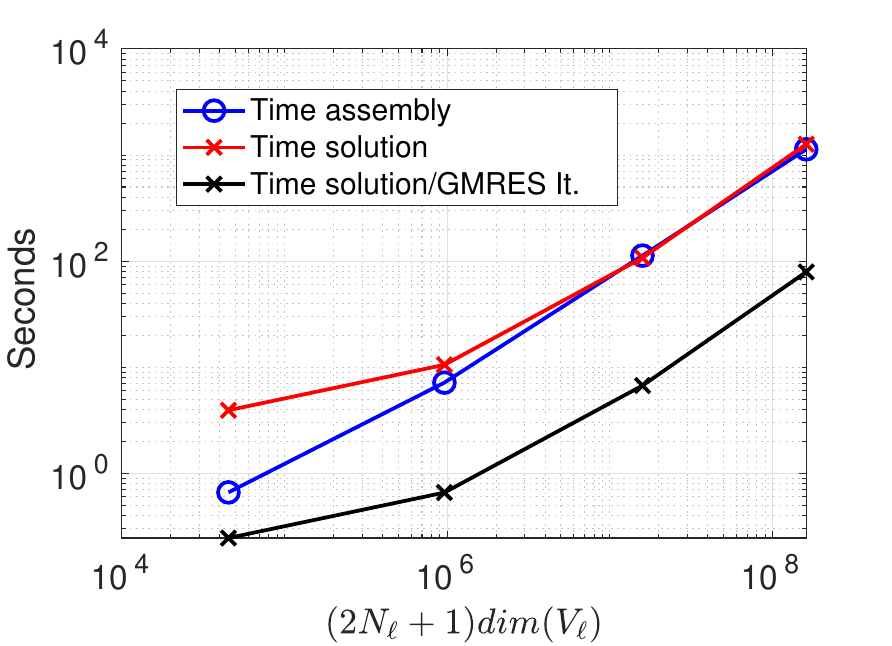}
\caption{Verification of \eqref{eq:lemma} without (left) and with (center) box constraints. The right panel shows the growth of the computational time to assemble the finite element matrices and to solve the optimality system as the problem size increases.}\label{fig:time_solution}
\end{figure}

\subsection{Complexity results}

We finally compare the MC method and the MLMC method described by \eqref{eq:MLMC} both in term of complexity to reach a given tolerance $\varepsilon$ and of related computational times.
For the unconstrained case, Fig. \ref{fig:unconstrained} shows for $d=1$ (top row) and $d=2$ (bottom row): the linear decay of the error with respect to a prescribed tolerance (left column), the related complexity measured by $\text{dim}{V_{L}}(2N_{K}+1)$ (with $L$ and $N_K$ chosen as in \eqref{eq:L_N_k}) for the MC method and by $\sum_{\ell=0}^L \text{dim}{V_{\ell}}(2N_{\ell}+1)$ for MLMC method (center column), and the overall computational time measured in seconds (right column). Fig. \ref{fig:constrained} replicates Fig. \ref{fig:unconstrained} for the constrained case.

\begin{figure}
\centering
\includegraphics[scale=0.33]{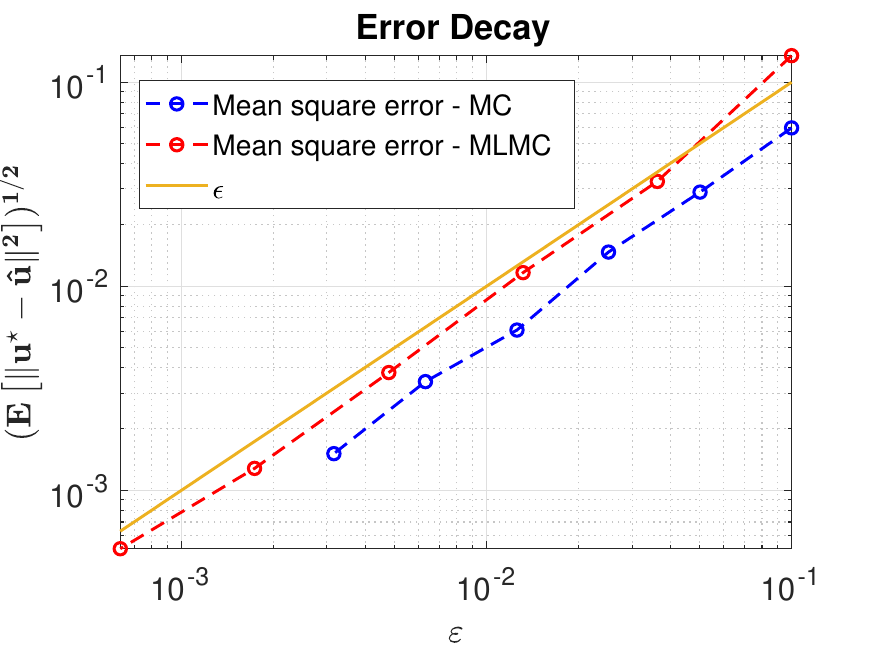}.pdf\includegraphics[scale=0.33]{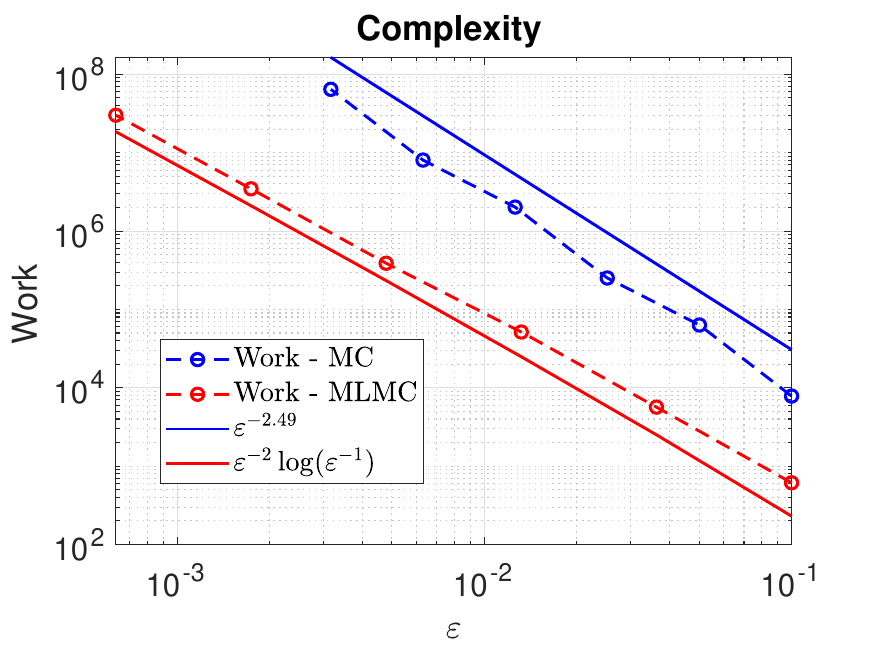}\includegraphics[scale=0.33]{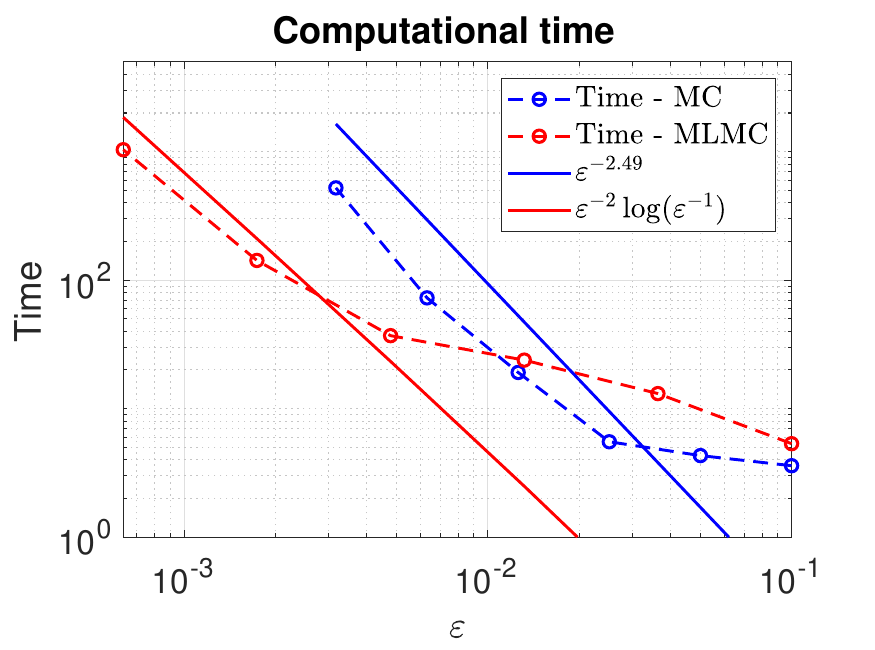}\\
\includegraphics[scale=0.33]{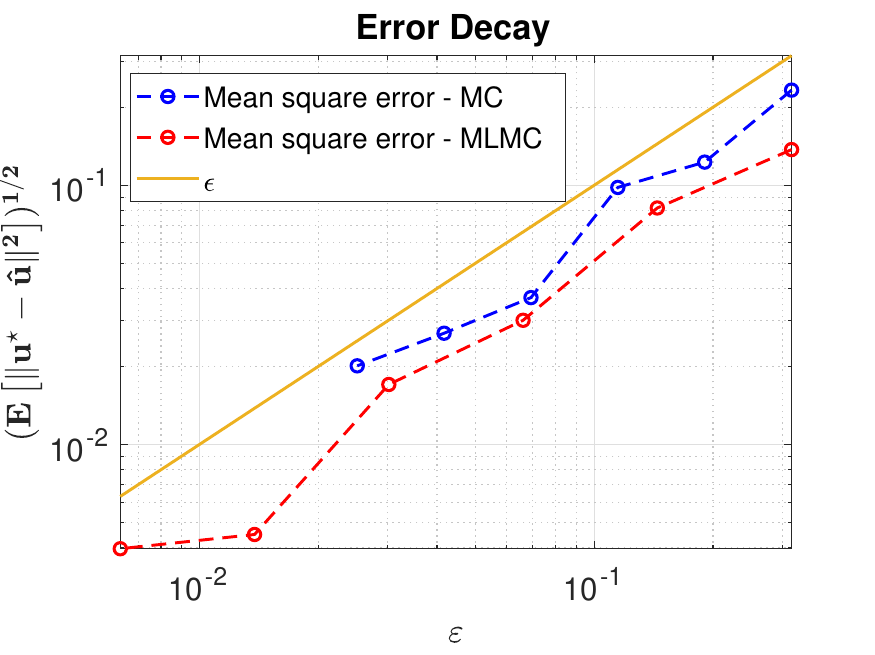}\includegraphics[scale=0.33]{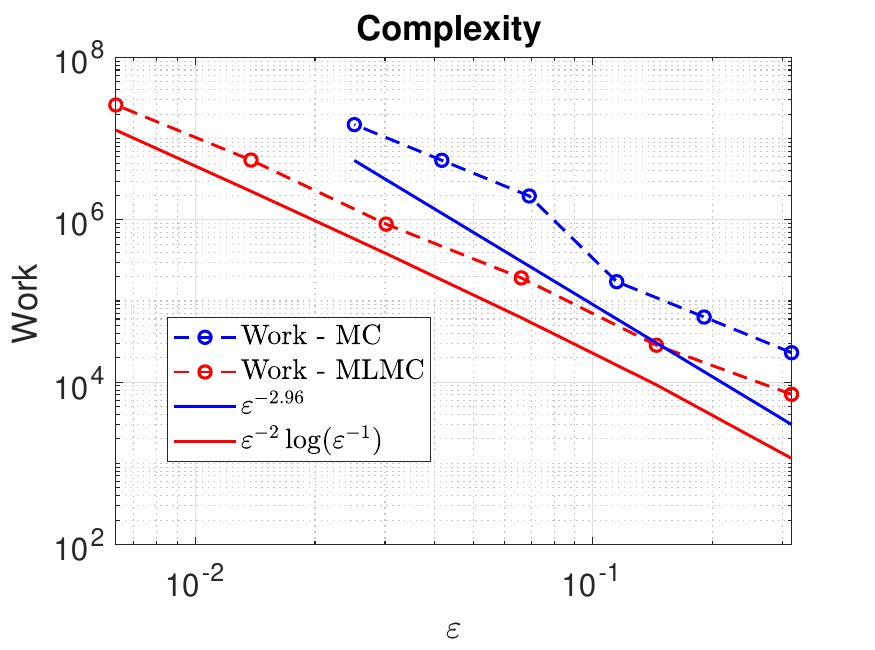}\includegraphics[scale=0.33]{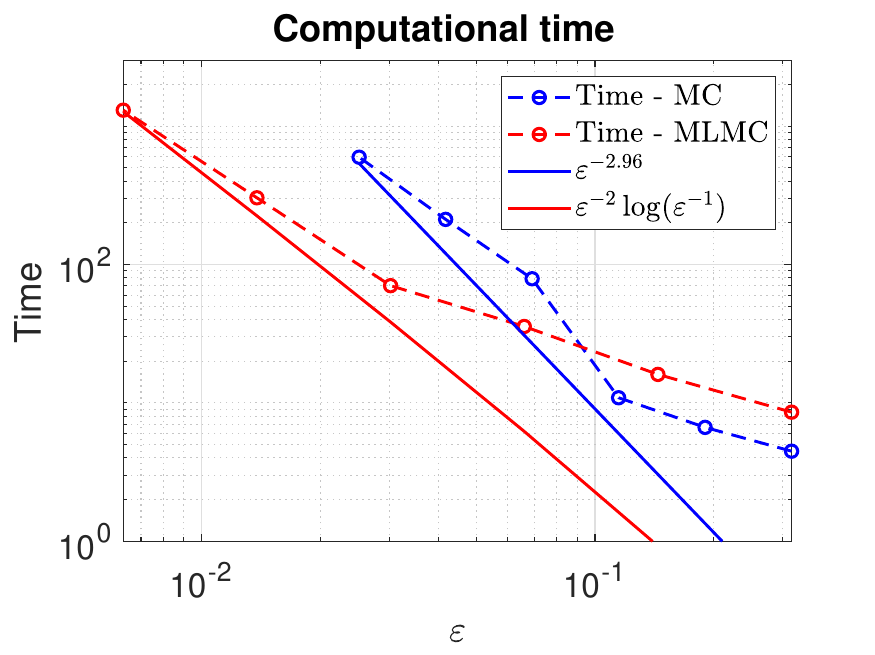}
\caption{For the unconstrained problem, we report the error decay (left panel), the computational complexity (center panel) and the computational time (right panel). The top row refers to $d=1$, the bottom row to $d=2$.}\label{fig:unconstrained}
\end{figure}
\begin{figure}
\centering
\includegraphics[scale=0.33]{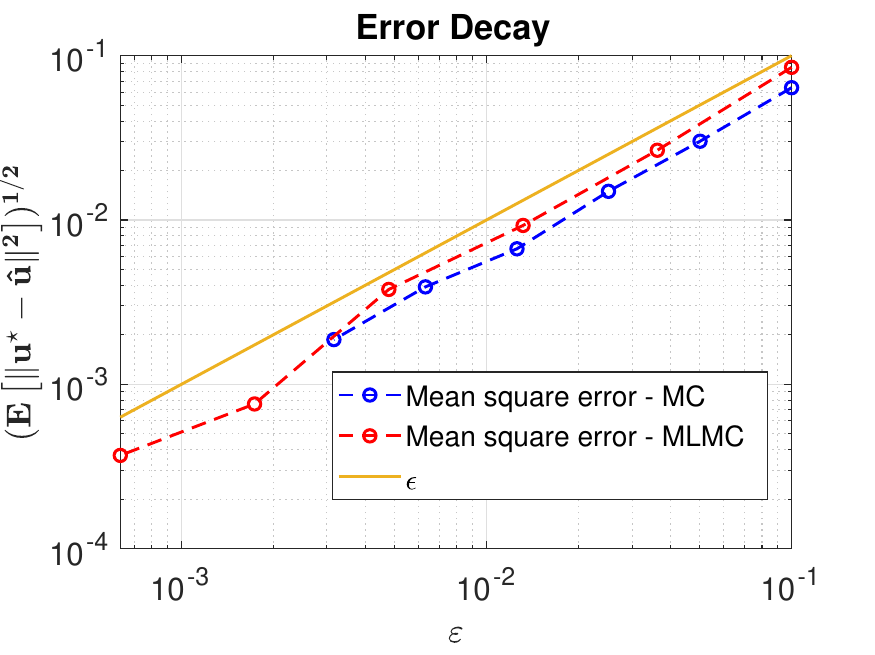}\includegraphics[scale=0.33]{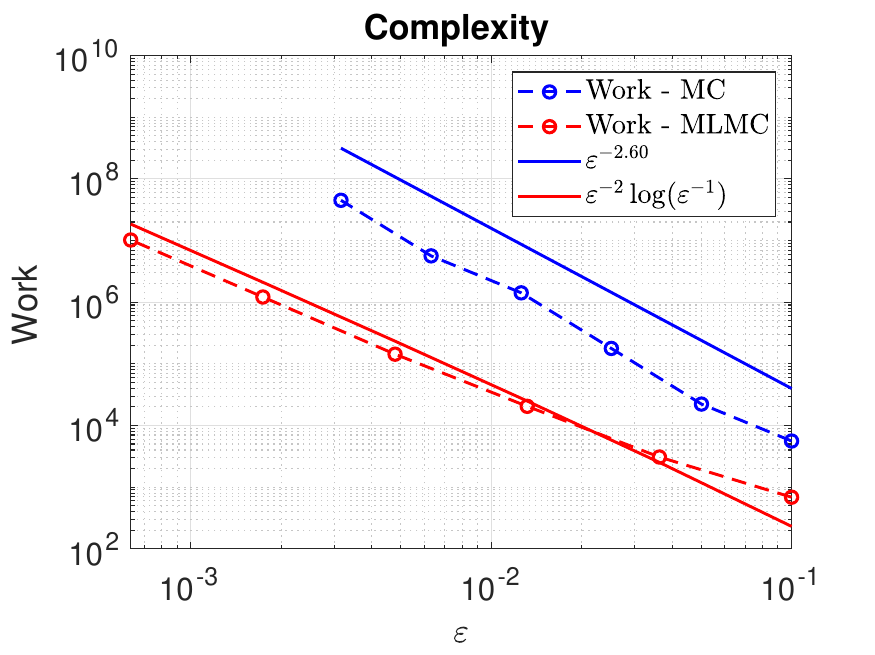}\includegraphics[scale=0.33]{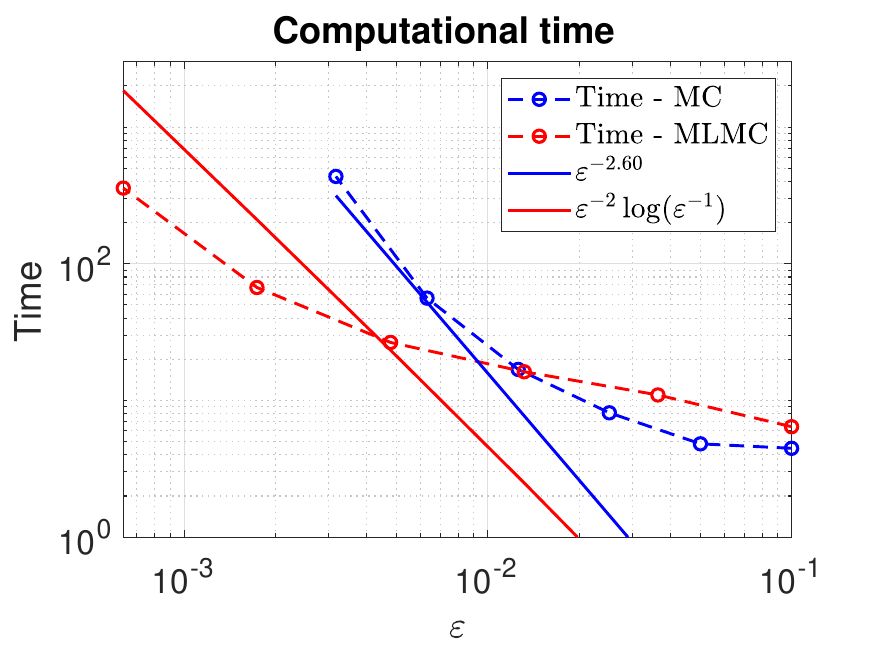}\\
\includegraphics[scale=0.33]{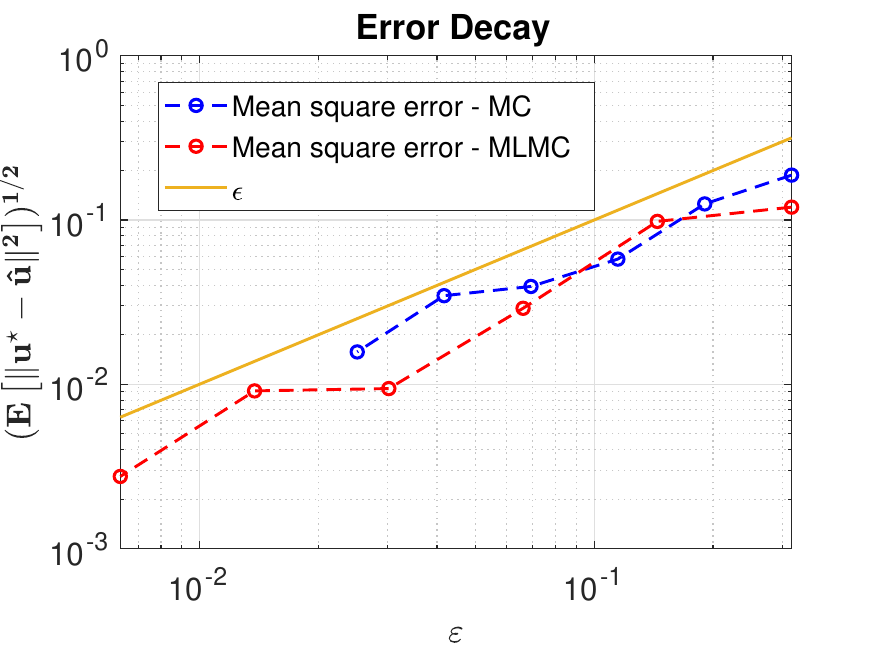}\includegraphics[scale=0.33]{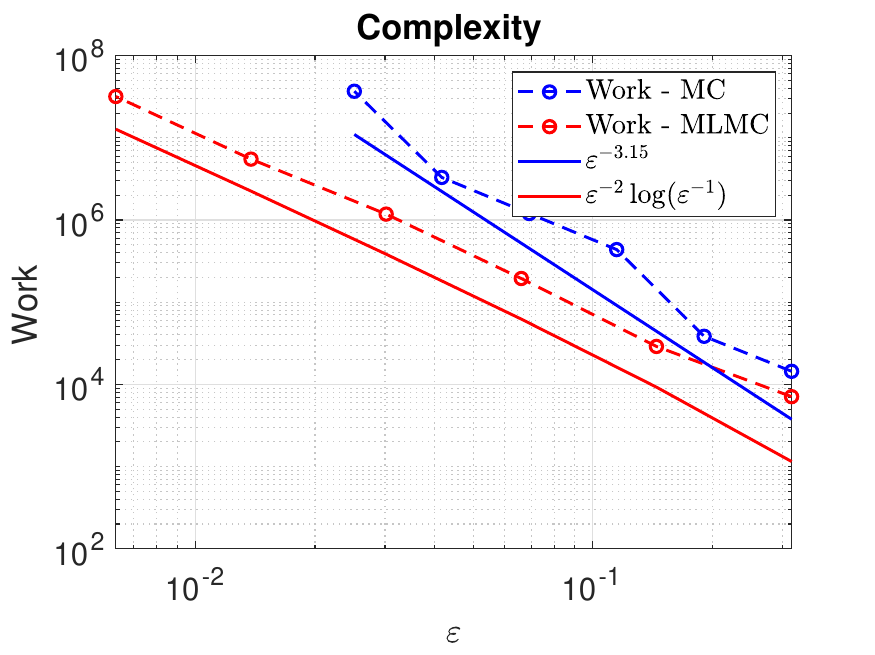}\includegraphics[scale=0.33]{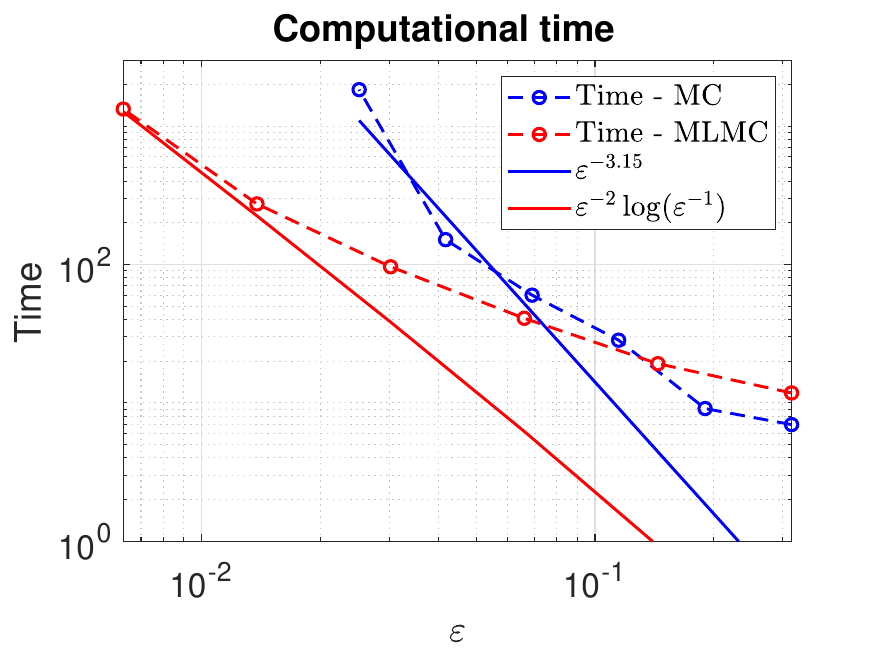}
\caption{For the constrained problem, we report the error decay (left panel), the computational complexity (center panel) and the computational time (right panel). The top row refers to $d=1$, the bottom row to $d=2$.}
\label{fig:constrained}
\end{figure}

Figures \ref{fig:unconstrained} and \ref{fig:constrained} shows the reduced computational cost of the proposed method compared to a standard MC sample average approximation, and confirm the complexity analysis presented in Section \ref{sec:Complexity}, in particular the simplified result \eqref{eq:complexity_MLMC}.
We further emphasize that in all simulations we could not reach a smaller tolerance with the MC method, since the single, very large, optimization problem quickly saturates the memory of the workstation as $\varepsilon$ decreases. In constrast, the MLMC method allows us to reach smaller tolerances by solving a sequence of smaller optimization problems that more easily satisfy the limits of our workstation.

\section{Conclusions}
In this manuscript, we presented a methodology to use multilevel quadrature formulae in the context of OCPs under uncertainty. It consists in solving a sequence of minimization problems discretized with different levels of accuracy, and in combining a-posteriori the approximated quadratures of the discretized adjoint variables.
We provided a convergence and complexity analysis for an unconstrained linear quadratic optimization problem, and numerical experiments confirm the better complexity compared to common sample-average approaches, even with box constraints.

Our multilevel technique can in principle readily accomodate nonlinear PDE constraints and general objective functionals, but currently we do not have a comprehensive convergence result. Intuitively, we expect that the reduced functional should be locally convex around the optimal solution, so that for sufficiently fine discretizations, the approximated controls remain close to the exact one.

General risk measures can also be considered, as long as they lead to an optimality condition requiring the control to be equal to an expectation of (possibly, a continuous function of) the adjoint variable. The smoothed Conditional Value at Risk (CVAR) satisfies this condition, while the discontinuity of the unsmoothed CVAR might be potentially handled with techniques from \cite{giles2019multilevel}.
These extensions will be the subject of future works.

\section*{Acknowledgments}
The second author is member of the INDAM-GNCS group.

\end{document}